\newtheorem{theorem}{Theorem}[section] 
\newtheorem{lemma}[theorem]{Lemma}
\newtheorem{corollary}[theorem]{Corollary}
\newtheorem*{theorem*}{Theorem}
\newtheorem*{fcthm*}{Finite Cork Theorem}
\newtheorem*{ccthm*}{Cork Consolidation Theorem}
\newtheorem*{thm1*}{Theorem 1}
\newtheorem*{thm2*}{Theorem 2}
\newtheorem*{lbthm*}{Generalized 4D Lightbulb Theorem}
\newtheorem*{2lbthm*}{Generalized 4D Lightbulb Theorem (restated)}
\newtheorem*{icthms*}{Infinite Cork Theorems}
\newtheorem*{aclemma*}{\ac-Lemma}
\newtheorem*{mclemma*}{Multicork Lemma}
\newtheorem*{multicorktheorem*}{Multicork Theorem}
\newtheorem*{lemma*}{Lemma}
\newtheorem*{corollary*}{Corollary}
\newcommand{\thistheoremname}{}
\newtheorem{genericthm}[theorem]{\thistheoremname}
\theoremstyle{definition}
\newtheorem{definition}[theorem]{Definition}
\newtheorem{remark}[theorem]{Remark}
\newtheorem*{remark*}{Remark}
\newtheorem*{definition*}{Definition}
\newtheorem*{remarks*}{Remarks}
\newtheorem*{addenda*}{Addenda}
\newcommand{\fig}[3]{\begin{figure}\includegraphics[height=#1pt]{#2}#3\end{figure}}
\newcommand{\bit}[1]{\textbf{\textit{#1}}} 
\newcommand{\id}{\textup{id}}
\newcommand{\pt}{\textup{pt}}
\newcommand{\sto}{\!\!\xymatrix@C=1em{{}\ar@{~>}[r]&{}}\!\!}
\newcommand{\interior}{\textup{int}}
\newcommand{\ac}{\textup{AC}}
\newcommand{\cs}{\mathop\#}
\newcommand{\items}{\begin{itemize}[leftmargin=25pt,rightmargin=5pt]
  \setlength\itemsep{2pt}}
\newcommand{\stopitems}{\end{itemize}}
\author{Patrick Naylor}
\address{Department of Pure Mathematics, University of Waterloo, Waterloo, Ontario, N2L 3G1, Canada}
\email{patrick.naylor@uwaterloo.ca} 
\urladdr{https://patricknaylor.org}
\author{Hannah R. Schwartz}
\address{Princeton University,
Princeton, NJ 08544}
\email{hs25@princeton.edu} 
\thanks{
    PN and HS were supported by the
    Max Planck Institute for Mathematics in Bonn. PN is supported by an NSERC CGS-D scholarship. 
}
\begin{document}

\title{Gluck twisting roll spun knots}

\begin{abstract}
We show that the smooth homotopy $4$-sphere obtained by Gluck twisting the $m$-twist $n$-roll spin of any unknotting number one knot is diffeomorphic to the standard $4$-sphere, for any $m,n \in \mathbb{Z}$. It follows as a corollary that an infinite collection of twisted doubles of Gompf's infinite order corks are standard.  
\end{abstract}

\maketitle

\vskip-.4in
\vskip-.4in

\parskip 2pt

\setcounter{section}{-1}

\parskip 2pt

\section{Introduction and Motivation}\label{intro}

We begin by recalling the notion of a Gluck twist of a $2$-sphere $S$ smoothly embedded in $S^4$, defined by Gluck in \cite{gluck:2-spheres}. For the unit sphere $S^2 \subset \mathbb{R}^3$, let $r_\theta:S^2 \to S^2$ be the diffeomorphism that rotates $S^2$ by an angle of $\theta$ about the $z$-axis. The \bit{Gluck twist of $S^4$ along $S$} is the smooth homotopy $4$-sphere $$\Sigma_S= (S^4-\interior(N)) \cup_\tau N$$ where $N$ is a regular neighborhood of $S$ diffeomorphic to $S^2 \times D^2$, and $\tau$ is the automorphism of $\partial N \cong S^2 \times S^1$ sending $(x, \theta) \mapsto (r_\theta(x), \theta)$. Although Gluck twists can be defined for spheres with trivial normal bundles in any smooth $4$-manifold, here we will only consider Gluck twists on $S^4$.  
 
By work of Freedman \cite{freedman:simply-connected}, Gluck twists are in fact homeomorphic to $S^4$, and so give a family of potential counterexamples to the smooth $4$-dimensional Poincar\'e conjecture. Gluck \cite{gluck:2-spheres} gave the first non-trivial spheres in $S^4$ whose Gluck twists are standard, i.e., diffeomorphic to the $4$-sphere. His examples include the well-known family of spun knots defined by Artin \cite{artin}. Later, both Gordon \cite{gordon} and Pao \cite{pao} independently showed that Gluck twists of twist spun knots, a generalization of spun-knots due to Zeeman \cite{zeeman}, are also standard. 

A further generalization of spun knots was later given by Litherland \cite{litherland}, who defined the family of deform spun 2-knots. These include twist roll spun knots, which are the focus of this paper. With the exception of twist roll spun torus knots, which Litherland \cite{litherland} showed are isotopic to twist spins of the same knot, Gluck twists of twist roll spun knots are notably absent from the list of smooth homotopy $4$-spheres known to be standard. We remedy this, in part, by providing an infinite family of twist roll spins with standard Gluck twists.

\begin{theorem*}
Gluck twists of $m$-twist $n$-roll spins of unknotting number one knots are standard. 
\end{theorem*}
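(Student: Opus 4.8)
\emph{Plan.} The idea is to reduce the computation to the unknot by realizing the unknotting crossing change as a surgery in $S^4$. Write $S_K$ for the $m$-twist $n$-roll spin of a knot $K$; by \cite{litherland}, $S_K$ arises by spinning a ball--arc pair $(B^3,\kappa)$ presenting $K$ around a circle while simultaneously applying a fixed loop $\phi_t$ in $\diff(B^3,\partial B^3)$ that performs $m$ meridional twists and $n$ rolls of the arc $\kappa$. Since $K$ has unknotting number one there is an unknotting crossing circle $c$: an unknot in $S^3\smallsetminus K$ bounding a disk $D$ that meets $K$ transversally in two points of opposite sign, such that $\pm 1$-surgery on $c$ --- equivalently, a single integral twist of the trivial tangle $(D,D\cap K)$ --- turns $K$ into the unknot $U$. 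First I would isotope $(B^3,\kappa)$ so that this crossing circle is compatible with the spinning motion: the $m$ twists can be arranged to fix $c$, and the $n$ rolls carry $(D,D\cap\kappa)$ through a controlled family, so that spinning $c$ produces an embedded torus or Klein bottle $\widehat{c}\subset S^4\smallsetminus S_K$ together with a neighborhood $N(\widehat{c})$ on which a single fiberwise twist transforms $S_K$ into $S_U$.

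\emph{Reducing to, and disposing of, the unknot.} The $m$-twist $n$-roll spin of a torus knot is a twist spin of the same knot by Litherland \cite{litherland}, and the unknot is a (trivial) torus knot, so $S_U$ is a twist spin of $U$, hence the unknotted $2$-sphere by Zeeman \cite{zeeman} (equivalently, twisting and rolling the unknotted arc leaves it unknotted). Therefore $\Sigma_{S_U}\cong S^4$, since the Gluck twist of the unknotted $2$-sphere is standard. Because the Gluck regluing is supported in a tubular neighborhood $N(S_K)$ that may be taken disjoint from $N(\widehat{c})$ --- the two surfaces are swept out from disjoint pieces of the spun picture --- the Gluck twist commutes with the fiberwise twist along $\widehat{c}$. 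Hence $\Sigma_{S_K}$ is obtained from $\Sigma_{S_U}\cong S^4$ by performing the same fiberwise twist along the image of $\widehat{c}$. It then remains to show that this operation returns $S^4$: I would exhibit an explicit handle diagram in which the curve along which one surgers bounds an embedded disk in $S^4$ --- built from the disk $D$ together with the capping disks of $S_U$ --- so that the twist is undone by a handle slide and cancellation; this also makes the commuting claim above transparent.

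\emph{Main obstacle.} The delicate point is making the first step and this triviality claim precise at the same time. Under ordinary or twist spinning the crossing circle is carried along for free, but a roll is a global motion of $\kappa$ through the entire ball, so one must pin down exactly which surface $\widehat{c}$ is swept out, with which normal framing, and then verify that the resulting torus surgery really is a trivial operation on $S^4$ rather than a nontrivial logarithmic transform. The unknotting number one hypothesis is precisely what keeps this within reach: it forces the relevant $3$-dimensional cross-sections --- cyclic branched covers of $S^3$ along $K$ --- to be surgeries on knots in $S^3$ in the style of Montesinos' trick, so that the ambient $4$-manifold can be followed by Kirby calculus throughout, and one ultimately recognizes $\Sigma_{S_K}$ as a twisted double of a cork that can be explicitly standardized.
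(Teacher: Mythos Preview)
Your approach is genuinely different from the paper's, and the overall architecture --- commute the Gluck twist past a torus surgery that implements the crossing change, reduce to $\Sigma_{S_U}\cong S^4$, and then undo the torus surgery --- is reasonable. The gap is in the last step, and you already sense it in your ``Main obstacle'' paragraph without closing it. After commuting, $\Sigma_{S_K}$ is exhibited as a logarithmic transformation of $S^4$ along the image torus $\widehat{c}$, and you must show this particular torus surgery returns $S^4$. You propose to find a compressing disk for $\widehat{c}$ ``built from the disk $D$ together with the capping disks of $S_U$,'' but $D$ meets the arc $\kappa$ in two points, so its spun image intersects the sphere along circles; once you have Gluck-twisted along $S_U$ those intersections do not simply disappear, and there is no evident embedded disk bounded by the surgery curve. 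Nor is $\widehat{c}$ visibly unknotted: under rolling the crossing circle is dragged all the way around $K$, so the swept-out torus is knotted in a way that depends on $K$ and on $n$. Your closing sentence --- that one ``ultimately recognizes $\Sigma_{S_K}$ as a twisted double of a cork that can be explicitly standardized'' --- is circular, since standardizing those twisted doubles is exactly the corollary one is trying to prove.

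For contrast, the paper avoids this trap by never needing to identify the crossing-circle torus. Instead it interprets the crossing change as a single finger move (Lemma~\ref{lem:rollspunhomotopy}), so that $S_{m,n}(K)$ is regularly homotopic to the unknotted sphere through one finger move and one Whitney move; then the result of \cite{theguys} produces a stabilization of $S_{m,n}(K)$ that is the \emph{unknotted} torus. Iwase's theorem converts the Gluck twist into a multiplicity one log transform on that stabilization, and now Montesinos' theorem applies directly. The point is that the torus on which one finally surgers is unknotted by construction, which is exactly what your argument lacks. If you want to salvage your route, the missing ingredient is an explicit identification of $\widehat{c}$ (with its framing) inside $S^4$ after the Gluck twist of $S_U$, together with a proof that the relevant multiplicity one transform on it is trivial; this seems at least as hard as the original problem.
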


Our proof gives a new method for proving that Gluck twists are standard, using a recent result from \cite{theguys} that relates the number of $1$-handle stabilizations needed to obtain an unknotted surface to the length of a regular homotopy to the unknot. Recently, Gluck twists on roll spun knots arose as ``twisted doubles" of the infinite order corks constructed by Gompf in \cite{gompf:infinite-order}. As a corollary of our main theorem, we show that many of these doubles are standard, partially answering Question $2.2$ of \cite{gompf:infinite-order}. 

\begin{corollary*}
For all $n,m \in \mathbb{Z}$ and any knot $K \subset S^3$ with unknotting number one, Gompf's twisted double $\mathcal{D}_{m,n}(K)$ is standard.
\end{corollary*}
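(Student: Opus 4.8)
The plan is to reduce the corollary directly to the main theorem by identifying Gompf's twisted double $\mathcal{D}_{m,n}(K)$ with the Gluck twist of $S^4$ along an explicit twist roll spun knot. First I would recall from \cite{gompf:infinite-order} the construction of $\mathcal{D}_{m,n}(K)$: Gompf's infinite order cork carries a boundary diffeomorphism (the cork twist), and the twisted double is obtained by gluing two copies of the cork along a regluing map that records the knot $K$ together with the integer parameters $m$ and $n$. Unwinding this construction---or quoting the identification already implicit in \cite{gompf:infinite-order}---one finds that $\mathcal{D}_{m,n}(K)$ is diffeomorphic to $\Sigma_S$, where $S \subset S^4$ is the $m$-twist $n$-roll spin of $K$ (up to a harmless reindexing of the pair $(m,n)$, which is irrelevant for us since both parameters range over all of $\mathbb{Z}$). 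The conceptual point is that doubling a cork along its cork twist recovers the standard $S^4$, so the only genuine gluing data remaining is the spinning of $K$ by $m$ twists and $n$ rolls, and this is precisely the data defining the Gluck twist of a twist roll spun knot.

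Granting this diffeomorphism $\mathcal{D}_{m,n}(K) \cong \Sigma_S$, the corollary follows at once: since $K$ has unknotting number one, the main theorem shows that $\Sigma_S$ is diffeomorphic to the standard $4$-sphere for every $m, n \in \mathbb{Z}$, and hence so is $\mathcal{D}_{m,n}(K)$.

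The real content lies in the first step: the bookkeeping needed to match Gompf's handle conventions for the twisted double against the standard surgery description of a Gluck twist on a deform spun $2$-knot. One must verify that the twist and roll parameters correspond on the nose---up to the usual sign and orientation ambiguities---and that the identification remains valid for negative values of $m$ and $n$, not merely nonnegative ones. I expect this to be the main obstacle, and would handle it by presenting both $\mathcal{D}_{m,n}(K)$ and $\Sigma_S$ as Kirby diagrams and producing an explicit sequence of handle moves carrying one to the other, in which $m$ and $n$ appear manifestly as integral framing and rotation parameters.
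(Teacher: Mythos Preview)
Your overall plan matches the paper's: identify $\mathcal{D}_{m,n}(K)$ with a Gluck twist on a roll spun knot and then invoke Theorem~\ref{main}. The specific identification you anticipate, however, is not correct, and the discrepancy is not a mere reindexing. In Gompf's construction the integer $m$ is the framing of the $2$-handle attached along a meridian of the slice disk $D_K$; after doubling along $f^n$, this amounts to regluing a copy of $S^2\times D^2$ by $\tau^m$. Since $\tau$ has order two in $\pi_0\diff(S^2\times S^1)$, only the parity of $m$ survives: for $m$ even one gets $\mathcal{D}_{m,n}(K)\cong S^4$ directly (no Gluck twist at all), while for $m$ odd one obtains the Gluck twist on the $n$-roll spin $S_{0,n}(K)$, \emph{not} on $S_{m,n}(K)$. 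The parameter $m$ never becomes the twist parameter of a deform spun knot. The paper records exactly this dichotomy (attributing the parity split to Tange and the identification of the sphere to Gompf and Akbulut in special cases, with a sketch of the general argument), and then applies Theorem~\ref{main} to $S_{0,n}(K)$.

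None of this harms your conclusion, since Theorem~\ref{main} covers $S_{0,n}(K)$ just as well as $S_{m,n}(K)$. But the Kirby-calculus verification you propose would not yield the match you expect: you would see $m$ collapse modulo $2$ rather than persist as a spin parameter, and you would need to handle the even and odd cases separately.
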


Our proof extends the result of Akbulut \cite{akbulut1}, that each twisted double $\mathcal{D}_{m,\pm 1}(K)$ is standard when $K$ is the figure-eight knot.

\subsection*{Acknowledgements}
Both authors are extremely grateful to the hospitality provided by the Max Planck Institute for Mathematics in Bonn, where this collaboration began. The first author thanks his advisor Doug Park, while the second author thanks her adviser Paul Melvin for many helpful conversations about Gompf's twisted doubles, as well as Bob Gompf, for constructing the twisted doubles that inspired this paper.

\section{Preliminaries} 

All manifolds and maps we consider (in particular embeddings and isotopies) will be smooth.

\subsection{Regular homotopies and stabilizations} \label{reghom} 

%
Throughout the following section, let $S$ be a smoothly embedded 2-sphere in $S^4$.
\begin{definition} \label{guidingarc} 
Suppose that $\alpha$ is an embedded arc with endpoints on $S$ whose interior is embedded in $S^4-S$, as in the top of both diagrams of Figure \ref{fig:guidingarc}. Such an arc is called a \bit{guiding arc} for $S$. 
\end{definition}

We use guiding arcs in two different contexts, both described below and featured in Figure \ref{fig:guidingarc}: to specify a stabilization of a $2$-sphere (as in Figure \ref{fig:stab0}), and to define a regular homotopy (as in Figure \ref{fig:fmwm}). 

\begin{figure}[ht]
    \begin{subfigure}[t]{.40\textwidth}
      \centering
      \includegraphics[width=.85\linewidth]{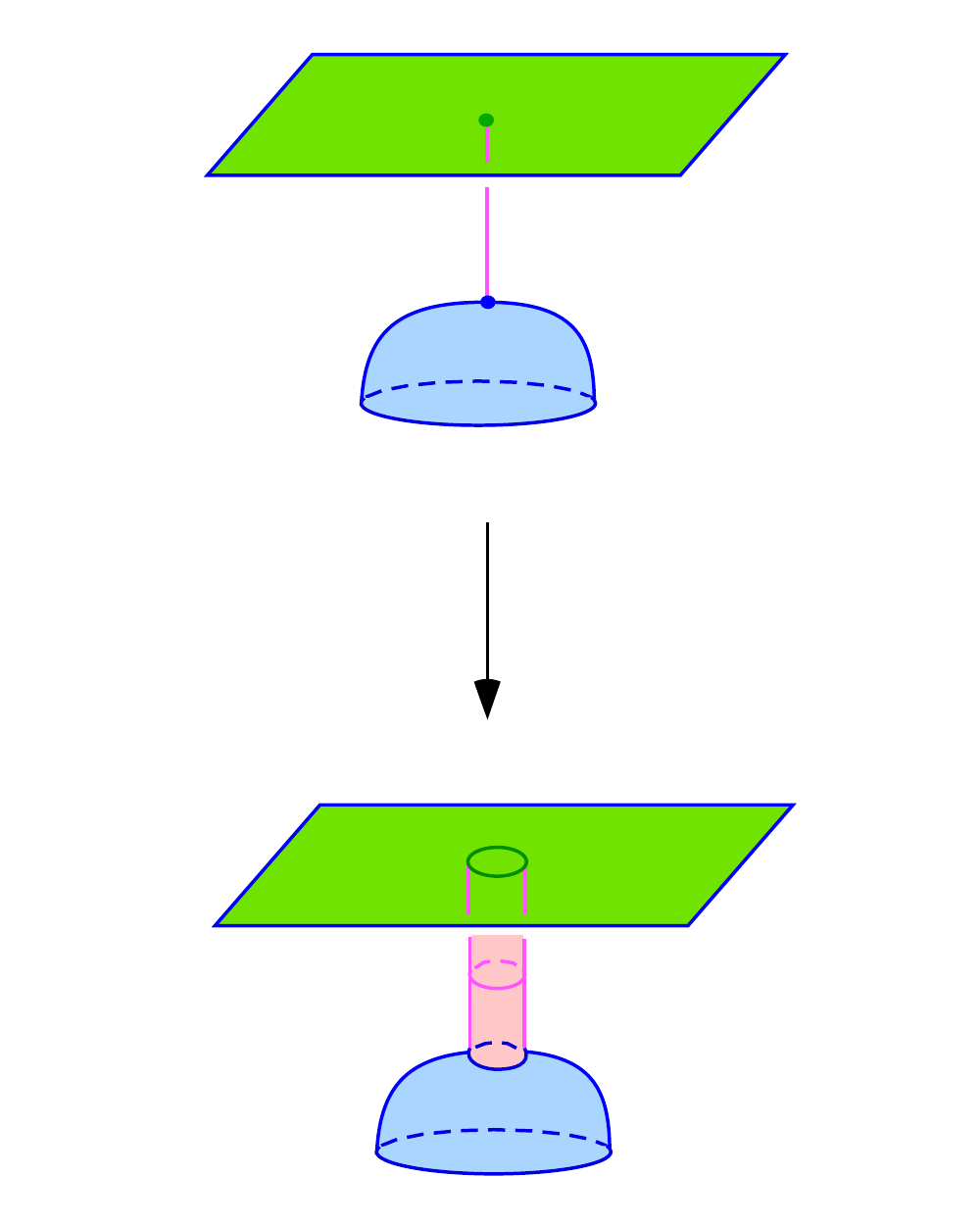}  
     \put(-130,155){$S$}
    \put(-130,33){$T$}
    \put(-76,158){$\alpha$}
      \caption{Stabilization}
      \label{fig:stab0}
    \end{subfigure}
    \begin{subfigure}[t]{.52\textwidth}
      \centering
      \includegraphics[width=1.0\textwidth]{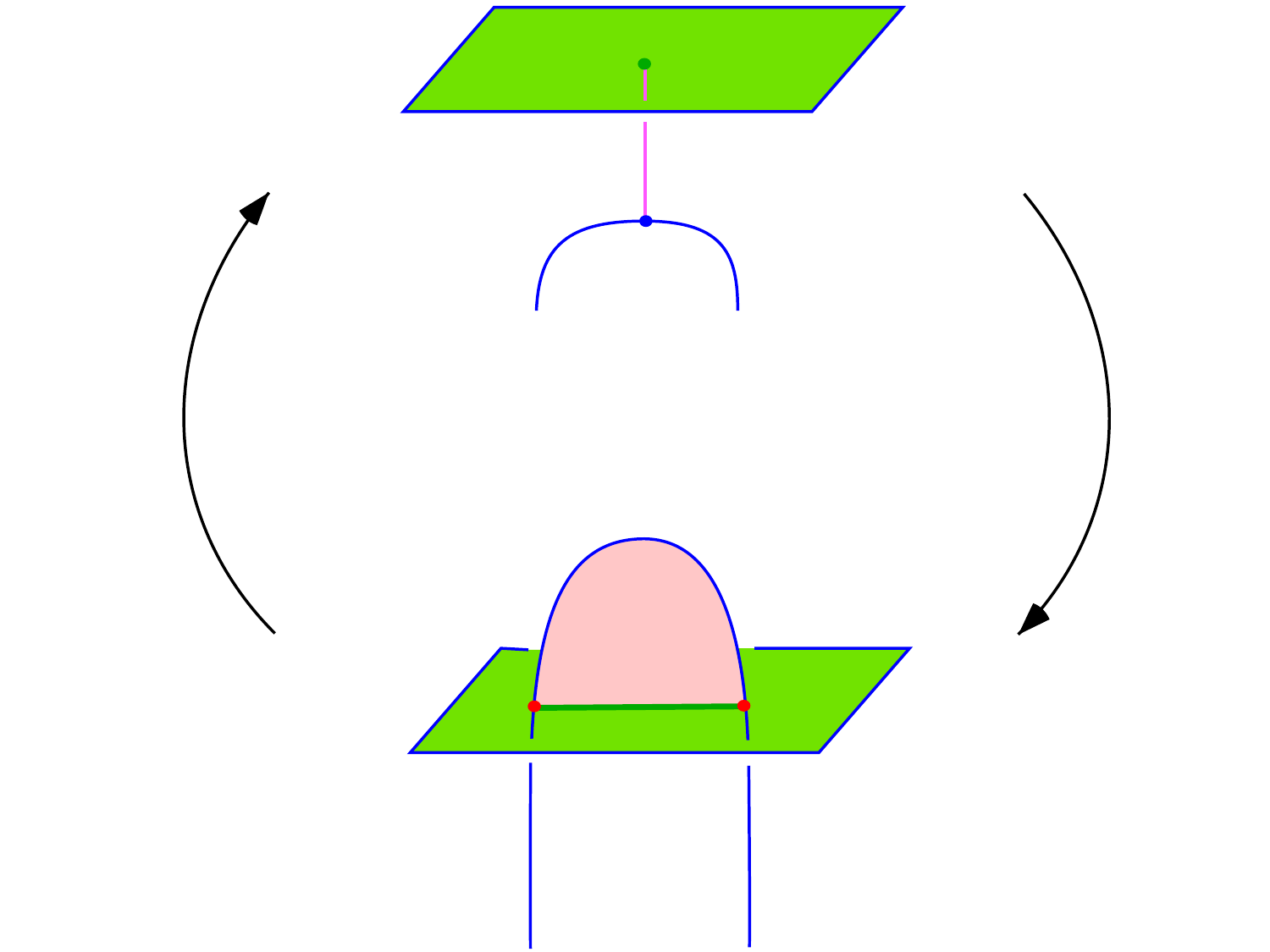}
    \put(-115,149){$\alpha$}
     \put(-170,147){$S$}
    \put(-170,23){$S'$}
    \put(-127,57){$W$}
    \put(-153,43){$+$}
    \put(-96,43){$-$}
    \put(-25,100){\small{FM}}
    \put(-235,100){\small{WM}}
      \caption{Finger move (FM) and Whitney move (WM) \label{fig:fmwm}} 
    \end{subfigure}
\caption{A stabilization (left) and finger move (right) along guiding arc $\alpha$.}
\label{fig:guidingarc}
\end{figure}

\begin{definition}
Suppose that $\alpha$ is a guiding arc for $S$. An embedded torus $T \subset S^4$ is a \bit{stabilization} of $S$ along $\alpha$ if there is an embedded cobordism in $S^4$ from $S$ to $T$ built from a collar neighborhood $S \times I$ of the sphere $S$ by attaching a single ($3$-dimensional) orientable $1$-handle with core $\alpha$ to $S \times \{1\}$. 
\end{definition}

\begin{definition} 
Suppose $\alpha$ is a guiding arc for $S$. A \bit{finger move} of $S$ along $\alpha$ is a smooth homotopy from $S$ to an immersed sphere $S'$ given by pushing $S$ along the arc $\alpha$ to introduce a pair of oppositely signed double points, as in Figure \ref{fig:fmwm}. The inverse homotopy is called a \bit{Whitney move}, and is supported along an embedded disk (the disk $W$ in Figure \ref{fig:fmwm}) called a \bit{Whitney disk}, whose boundary lies on the immersion $S'$. 
\end{definition} 

By Smale \cite[Theorem A]{smale}, any two smoothly embedded 2-spheres in $S^4$ are \bit{regularly homotopic}, i.e., homotopic through some finite sequence of finger and Whitney moves. The latter homotopy was first defined by its namesake Whitney \cite{whitney} -- later, algebraic and geometric techniques involving the former were developed, largely by Casson \cite{casson1986three}. Frequently, we picture sheets of embedded $2$-spheres in $S^4$ as ``movies'' of arcs in $S^3$, rather than via the schematics of Figure \ref{fig:guidingarc}. Finger and Whitney moves are illustrated from this perspective in Figure \ref{fig:fingerwhitneymove}.

\begin{figure}[ht]
    \centering
    \includegraphics[width=1.0\textwidth]{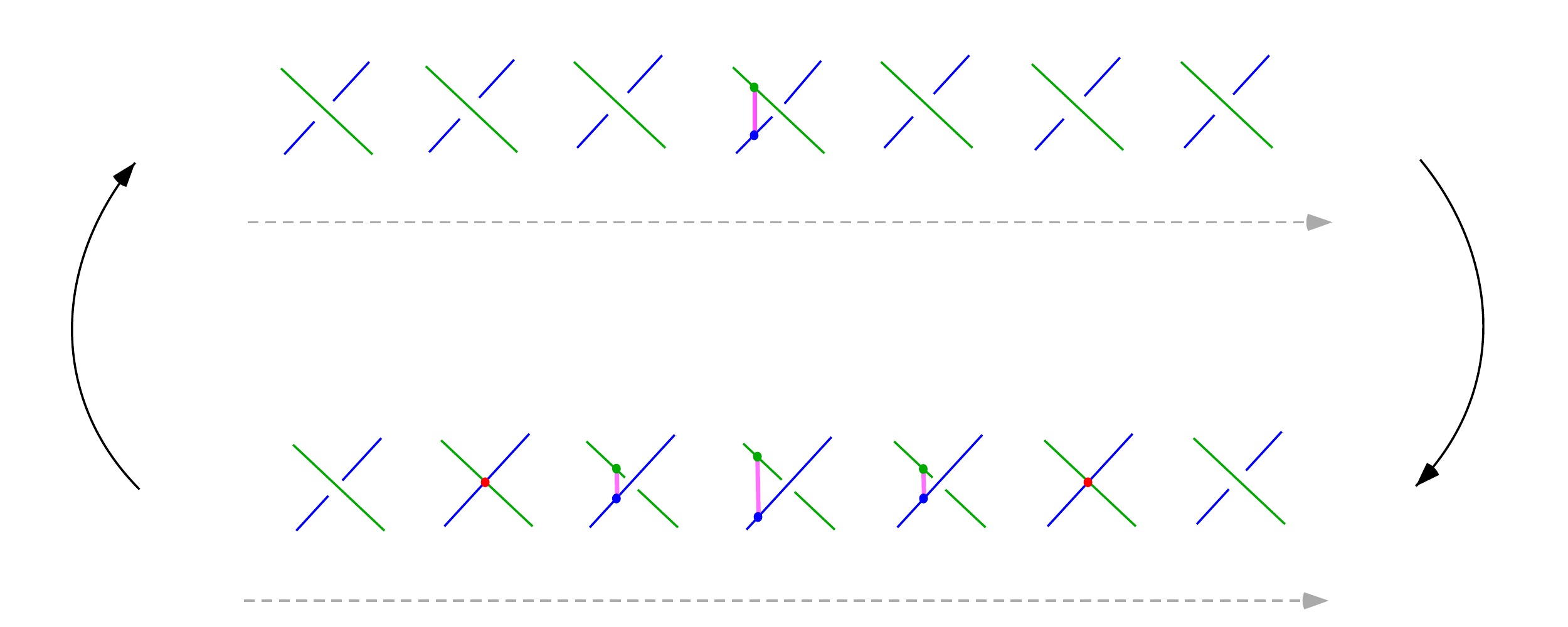}
    \put(-255,153){$\alpha$}
    \put(-258,38){$W$}
    \put(-339,40){$+$}
    \put(-137,41){$-$}
    \put(-20,95){Finger}
    \put(-17,85){move}
    \put(-493,95){Whitney}
    \put(-484,85){move}
    \caption{The spheres before and after a finger move along a guiding arc $\alpha$ (likewise, before and after a Whitney move along $W$). Note that the green and blue arcs in the middle $S^3$ cross-section (from different sheets of the surface) undergo a single crossing change, at the expense of adding cancelling double points.}%
    \label{fig:fingerwhitneymove}
\end{figure}

Simple enough regular homotopies from a $2$-sphere $S \subset S^4$ to the unknotted $2$-sphere give a sequence of stabilizations of $S$ that lead to an unknotted surface (one bounding a solid handlebody). In particular, given a regular homotopy from $K$ to the unknot consisting of one finger and one Whitney move, the unknotted torus can be obtained by a single stabilization of $S$; this was recently shown in \cite{theguys}. 

\begin{theorem}[Joseph-Klug-Ruppik-Schwartz] \label{guys}
Suppose there is a regular homotopy in $S^4$ from a $2$-knot $K$ to the unknot, consisting of one finger and one Whitney move. Then, there is a stabilization of $K$ that is smoothly isotopic to the unknotted torus. 
\end{theorem}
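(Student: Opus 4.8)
The plan is to let the required stabilization be the stabilization of $K$ along the very arc $\alpha$ that guides the given finger move, and then to recognize the resulting torus as a stabilization of the unknot, which is automatically standard. Write the regular homotopy as $K \xrightarrow{\,F\,} S' \xrightarrow{\,W\,} U$, where $F$ is the finger move along $\alpha$ producing the canceling pair of double points $p_+,p_-$ on the immersed sphere $S'$, and $W$ is the Whitney move pairing $p_+,p_-$ along an embedded Whitney disk (which I also call $W$). Let $T$ be the stabilization of $K$ along $\alpha$; the goal is to prove $T$ is smoothly isotopic to the unknotted torus.

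The first — and most delicate — step is the observation that attaching the stabilizing $1$-handle to $K$ along $\alpha$ turns the finger move into an ambient isotopy. This is the familiar ``tube trick'': the handle attached along $\alpha$ may be taken to run alongside the finger that $F$ pushes through $\alpha$, and once this tube is present the two sheets that $F$ forces to cross are already joined, so the crossing change they undergo can be realized by an isotopy of $T$ supported near $\alpha$ rather than by introducing a canceling pair of double points. I would make this precise in the ``movie'' picture of Figure~\ref{fig:fingerwhitneymove}: represent the stabilizing tube as a pair of arcs running through the intermediate $S^3$ cross-sections next to the green and blue arcs of the finger, and check frame by frame that the crossing change between those arcs, which forces double points on the un-stabilized sphere, is realized by an honest isotopy of arcs once the tube is in place.

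Granting this, $T$ is isotopic to the embedded genus-one surface $T'$ gotten from $S'$ by deleting $p_+,p_-$ along that tube; but $T'$ still carries the Whitney disk $W$, and since $W$ now guides an isotopy of an \emph{embedded} surface rather than a double-point-removing Whitney move, pushing $T'$ across $W$ is an ambient isotopy of $T'$ whose effect on the underlying sphere is exactly the move $S'\to U$. Carrying the stabilizing tube along this isotopy exhibits $T'\cong T$ as a stabilization of the unknotted sphere $U$. Finally, every stabilization of the unknotted $2$-sphere in $S^4$ is isotopic to the unknotted torus: the complement of its regular neighborhood is $S^1\times D^3$, and a properly embedded arc in $S^1\times D^3$ with endpoints on $S^1\times S^2$ becomes boundary-parallel after sliding its endpoints to unwind it, so its stabilizing tube is standard. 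Combining the steps gives $T\cong$ unknotted torus. The crux, as indicated, is the tube-trick step: one must choose the stabilizing handle carefully enough that both the finger move and the trailing Whitney move become isotopies of the stabilized surface, keeping track of how the tube threads past the disk $W$; the remaining ingredients — that what results is a stabilization of $U$, and that such stabilizations are standard — are routine once this is arranged.
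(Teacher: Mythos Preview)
This theorem is not proved in the present paper; it is quoted from \cite{theguys}, and the only information the paper gives about that proof is the parenthetical in Remark~\ref{rmk:glucktwists}: the argument in \cite{theguys} produces a single torus that \emph{destabilizes} to both $K$ and $U$ by compressing along two disks with the \emph{same boundary curve}.  So there is no in-paper proof to compare against line by line, but that hint is enough to see where your sketch diverges from a complete argument.

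Your final step is fine: every guiding arc for the unknotted sphere lives in $S^1\times D^3$, arcs in a $4$-manifold are unknotted, and sliding an endpoint once around the $S^1$-factor kills the winding, so every stabilization of $U$ really is the standard torus.  The gap is earlier, in the sentence ``$T'$ still carries the Whitney disk $W$, and \ldots pushing $T'$ across $W$ is an ambient isotopy.''  After you stabilize along the finger arc $\alpha$, the tube you have produced on $S'$ runs along a boundary arc of the \emph{canonical} Whitney disk $V$ that undoes the finger move --- not, a priori, along a boundary arc of the \emph{given} Whitney disk $W$.  The boundary $\partial W$ meets $S'$ in two arcs terminating at the double points $p_\pm$; once you excise small disks at $p_\pm$ and splice in your tube, $\partial W$ no longer lies on the surface at all, so $W$ is neither a Whitney disk (there are no double points left) nor a compressing disk for $T'$.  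Running the ambient Whitney isotopy supported near $W$ drags the tube through the corners of $W$ in an uncontrolled way, and there is no reason the result should be a \emph{trivial} $1$-handle on $U$.  In the language of Remark~\ref{rmk:glucktwists}, what you are implicitly assuming is that the homotopy is \emph{arc-standard}: that $V$ and $W$ share a boundary arc, so that the same tube serves as a compressing-disk boundary for both.  Establishing (or circumventing) that is exactly the content you would need to supply; the rest of your outline is sound.
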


\subsection{Gluck twists and logarithmic transformations} \label{sec:gluck}

As in the introduction, let $\Sigma_S= (X-\interior(N)) \cup_\tau N$ denote the Gluck twist of a $2$-sphere $S \subset S^4$ with neighborhood $N$ and gluing map $\tau: S^1 \times S^2 \to S^1 \times S^2$. Note that although $\tau$ depends on the initial parametrization of the regular neighborhood $N$ of $S$, this choice is unique up to isotopy by \cite{gluck:2-spheres}. Still, it will occasionally be necessary to specify the two points in $S$ that are fixed by the rotation $r_\theta$ used to define $\tau$. 

\begin{definition}\label{poles}
The ``north and south poles" where $S^2 \subset \mathbb{R}^3$ intersects the $z$-axis are the fixed points of $r_\theta$. The corresponding points in $S$, under the parametrization of $N$, are called the \bit{poles} of the Gluck twist. 
\end{definition}

As noted in the introduction, the Gluck twist $\Sigma_S$ along a $2$-sphere $S \subset S^4$ produces a smooth homotopy 4-sphere. It then follows from work of Freedman \cite{freedman:simply-connected} that $\Sigma_S$ is homeomorphic to $S^4$. Although various families of Gluck twists have been shown to be standard, it remains an open question as to whether \emph{all} Gluck twists are diffeomorphic to $S^4$. However, the Gluck twist along $S$ is known to be standard in many cases\footnote{ A helpful and comprehensive list enumerating families of spheres whose Gluck twists are known to be standard can be found in Sunukjian \cite{sunukjian}.}: for instance, when $S$ is a spun knot, by \cite{gluck:2-spheres}, or when $S$ is a twist spun knot, by \cite{gordon} and \cite{pao}. In section \ref{sec:spinningconstructions}, we give precise definitions along with a further discussion of these spheres. 

Gluck twists on spheres \emph{not} necessarily arising from spinning constructions have also been studied extensively. The Gluck twist $\Sigma_S$ is standard when $S$ is 0-cobordant to the unknot (originally due to \cite{melvin}, and later reproved by \cite{sunukjian}). All ribbon 2-knots are 0-concordant to the unknot; therefore this gives a proof that Gluck twists of ribbon spheres are standard. An exercise in handlebody calculus provides an alternate proof of this fact; see \cite[Ex. 6.2.11(b)]{GS} and \cite{kirby:4-manifolds}. Nash and Stipsicz \cite{nashstipsicz} also use handlebody calculus to give an infinite family of 2-knots which have trivial Gluck twists, obtained by taking the union of two ribbon disks. Hughes, Kim, and Miller \cite{hugheskimmiller} recently showed that certain satellites also have trivial Gluck twists. 

Our main arguments will also involve surgering along embedded tori; this is another well-known ``cut-and-paste" operation on 4-manifolds. To define the construction, suppose that $T$ is an embedded torus in $S^4$, with regular neighbourhood $N$ diffeomorphic to $T^2\times D^2$. 

\begin{definition}\label{logtransform}
A \bit{logarithmic transformation}, or \bit{torus surgery}, on $S^4$ consists of removing the interior of $N$, and regluing it by a diffeomorphism $\phi:T^2\times \partial D^2\to \partial N$. We denote the result of this surgery by $$\Sigma_{T,\phi}=(S^4- \interior(N)) \cup_\phi T^2 \times D^2.$$ 
\end{definition}

As with our definition of the Gluck twist, the definition of a logarithmic transformation generalizes to an embedded torus with a trivial normal bundle in any $4$-manifold. 

Gluing $T^2\times D^2$ to the complement of $\interior (N)$ amounts to attaching a single 2-handle along $\partial N$, followed by two 3-handles, and a 4-handle. Thus it follows by \cite{laupoe} (and the fact that any automorphism of $T^2 \times S^1$ fixing $\{\pt\} \times S^1$ preserves its framing with respect to the product structure) that $\Sigma_{T,\phi}$ is determined up to diffeomorphism by the attaching curve of the $2$-handle. In other words, the only information necessary to specify the surgery is the homology class of $\phi(\{\pt\}\times S^1)$ in $H_1(\partial N;\mathbb{Z}) \cong H_1(T;\mathbb{Z})\oplus \mathbb{Z}$, where the second summand is generated by an oriented meridian to the torus $T$. 

\begin{definition} \label{logtransformmap}
The \bit{multiplicity} of the logarithmic transformation is the integer corresponding to the (oriented) curve $\phi(\{\pt\} \times S^1)$ in $H_1(S^4- N)\cong \mathbb{Z}$.
\end{definition} 

For more details, the reader is encouraged to consult \cite{GS} and \cite{larson}. The next result we recall was originally proven by Montesinos \cite{montesinos} in the early 80's, and later reproved by Kyle Larson \cite{larson} using results of Cerf \cite{cerf}. 

\begin{theorem}[Montesinos, Larson] \label{montesinos} 
Any multiplicity one logarithmic transformation on the unknotted torus in $S^4$ is diffeomorphic to $S^4$. 
\end{theorem}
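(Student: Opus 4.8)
The plan is to compute $\Sigma_{T,\phi}$ by handle calculus and then simplify the resulting diagram all the way down to that of the standard $S^4$, with Cerf's theorem $\Gamma_4=0$ \cite{cerf} supplying the final step. First I would record a handle decomposition of $S^4$ in which the unknotted torus $T$, together with a tubular neighborhood $N\cong T^2\times D^2$, is visible; concretely, $T$ is the standard genus-one Heegaard torus in an equatorial $S^3\subset S^4$. As recalled above, gluing $N$ back to $C:=S^4-\interior(N)$ amounts to attaching a $2$-handle, two $3$-handles, and a $4$-handle, so $S^4=C\cup h^2\cup h^3\cup h^3\cup h^4$; the first task is to write down an explicit Kirby diagram for $C$ and to locate in $\partial C=T^3$ the meridian $\mu$ and longitudes $\lambda_1,\lambda_2$ of $T$.

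Next, $\Sigma_{T,\phi}$ is obtained from $C$ by re-attaching, in place of $h^2\cup h^3\cup h^3\cup h^4$, a new $2$-handle along $\gamma=\phi(\{\pt\}\times S^1)\subset\partial C$, followed once more by two $3$-handles and a $4$-handle. By \cite{laupoe} the diffeomorphism type depends only on the isotopy class of $\gamma$, hence only on $[\gamma]\in H_1(\partial C)\cong H_1(T)\oplus\mathbb{Z}\langle\mu\rangle$; the multiplicity-one hypothesis says exactly that the $\mu$-coordinate of $[\gamma]$ is $\pm1$, so I may take $\gamma$ to be an explicit curve with $[\gamma]=a\lambda_1+b\lambda_2+\mu$. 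It is worth checking at this point that $\Sigma_{T,\phi}$ is a homotopy $4$-sphere: a van Kampen argument shows that attaching the new $2$-handle along such a $\gamma$ kills $\pi_1(C)\cong\mathbb{Z}=\langle\mu\rangle$, a Mayer--Vietoris computation shows the homology is that of $S^4$, and the $3$- and $4$-handles change neither. (This only records that we are in the expected situation: a handle diagram of a homotopy $4$-sphere need not simplify smoothly --- that is precisely the smooth $4$-dimensional Poincar\'e conjecture --- so the reduction below must be carried out by hand.)

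The heart of the argument is the explicit simplification. Because $T$ is unknotted and the multiplicity is one, the new $2$-handle can be isotoped to run geometrically once over one of the $1$-handles of $C$, with the correct framing, and hence cancels it. Continuing with handle slides and further cancellations --- a remaining $1$--$2$ pair, then a $2$--$3$ pair, and so on --- reduces the entire picture to a single $0$-handle and a single $4$-handle. The attaching map of that $4$-handle is an orientation-preserving self-diffeomorphism of $S^3$, which extends over $B^4$ by Cerf's theorem $\Gamma_4=0$; therefore the closed manifold obtained is the standard $S^4$, as claimed.

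I expect the main obstacle to be that first cancellation: tracking the curve $\gamma$ --- and, crucially, its framing --- inside $\partial C=T^3$ precisely enough to see that multiplicity one is exactly the hypothesis that lets the new $2$-handle slide off a $1$-handle of the complement, and then chasing the remaining Kirby moves by hand down to the empty diagram. By contrast, the input from Cerf's theorem is confined to the final step, where it is needed only to conclude that the union of the last $0$-handle and $4$-handle is the \emph{standard} four-sphere rather than some a priori exotic one.
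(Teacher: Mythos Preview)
The paper does not supply its own proof of this theorem: it is quoted as a result of Montesinos \cite{montesinos}, later reproved by Larson \cite{larson} ``using results of Cerf \cite{cerf},'' and then used as a black box. So there is nothing in the paper to compare against line by line.

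That said, your outline is precisely Larson's strategy rather than Montesinos', so in spirit you are reconstructing one of the two cited proofs. A couple of comments on the sketch itself. First, on $\partial C\cong T^3$ any primitive homology class is represented by a unique isotopy class of simple closed curve, so your reduction to $[\gamma]=a\lambda_1+b\lambda_2+\mu$ really does pin down the attaching circle; together with the observation (already in the paper, just above Definition~\ref{logtransformmap}) that any self-diffeomorphism of $T^3$ fixing $\{\pt\}\times S^1$ preserves its product framing, this is what makes the ``with the correct framing'' clause go through. Second, you will need to actually write down a Kirby diagram for $C$: the standard one has a $0$-handle, two $1$-handles, and a $0$-framed $2$-handle (Borromean-type picture), and it is in that diagram that the multiplicity-one curve visibly runs once over a $1$-handle and cancels. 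You correctly flag this bookkeeping, and the subsequent handle slides depending on $a,b$, as the substantive work; the invocation of Cerf at the end is exactly as in Larson's argument.
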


In addition to this theorem, we will also need a fact due to Iwase \cite{iwase} that every Gluck twist on a sphere $S \subset S^4$ is diffeomorphic to some logarithmic transformation. Implicit in his proof is the fact that these can be taken to be multiplicity one logarithmic transformations of any stabilization of $S$. We present (\`a la Kyle Larson) an alternate geometric proof of this fact that also relies on results of Cerf \cite{cerf}. 

\begin{theorem}[Iwase] \label{glucktostab} 
The Gluck twist $\Sigma_S$ along a $2$-sphere $S \subset S^4$ is diffeomorphic to a multiplicity one logarithmic transformation on any stabilization $T$ of $S$. 
\end{theorem}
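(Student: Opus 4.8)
The plan is to compare the two cut-and-paste operations directly on the level of handle decompositions, following the strategy already signposted in the paper: realize the Gluck twist along $S$ as a logarithmic transformation on a stabilization $T$ of $S$, and then pin down the multiplicity. First I would fix a guiding arc $\alpha$ for $S$ and let $T$ be the corresponding stabilization, obtained from the collar $S \times I$ by attaching an orientable $1$-handle with core $\alpha$. The key geometric observation is that the regular neighborhood $N_T \cong T^2 \times D^2$ of $T$ can be built from the regular neighborhood $N_S \cong S^2 \times D^2$ of $S$ by attaching a (thickened) $4$-dimensional $1$-handle that follows $\alpha$; dually, $S^4 - \interior(N_S)$ is obtained from $S^4 - \interior(N_T)$ by attaching a single $4$-dimensional $2$-handle along a curve $\gamma \subset \partial N_T$ that is a meridian of the $1$-handle running over $\alpha$. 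This is precisely the situation governed by Cerf's theorem (via \cite{cerf}, as in Larson's treatment): the complement of the stabilized torus, together with the data of which $2$-handle to attach, reconstructs the original complement up to diffeomorphism.

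Next I would carry out the comparison of gluing maps. Gluing $N_S$ back in by the Gluck map $\tau$ versus by the identity differs only in a neighborhood of a meridian circle $\{\pt\} \times S^1 \subset \partial N_S$, where $\tau$ inserts the loop of rotations $r_\theta$ in $SO(3) = SO(3)$ generating $\pi_1(SO(3)) \cong \bz/2$. Passing to the torus side, this same ``twisting by the nontrivial loop in $SO(3)$'' translates, under the stabilization, into changing the regluing diffeomorphism $\phi: T^2 \times \partial D^2 \to \partial N_T$ by a Dehn-twist-type map supported near the meridian of the new $1$-handle; concretely, $\phi(\{\pt\} \times S^1)$ changes from a curve isotopic to the meridian $\mu$ of $T$ (which would give back $S^4$ with an unknotted torus, multiplicity zero in the relevant sense) to a curve $\mu + \lambda$, where $\lambda$ is the core circle of the $1$-handle. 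Since $\lambda$ is null-homologous in $S^4 - N_T$ — it bounds in the complement, being the belt sphere circle of the stabilizing handle — the homology class $\phi(\{\pt\} \times S^1)$ in $H_1(S^4 - N_T) \cong \bz$ is the generator, i.e., the logarithmic transformation has multiplicity one. By \thmref{montesinos} the resulting manifold is standard when $T$ is unknotted, but for the present theorem I only need the structural statement that $\Sigma_S$ \emph{is} a multiplicity one logarithmic transformation on $T$, for an arbitrary stabilization.

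To finish, I would note that the argument above is insensitive to which guiding arc $\alpha$ (hence which stabilization $T$) is chosen: the identification of the gluing maps near the meridian, and the computation that the core of the stabilizing $1$-handle is null-homologous in the complement, hold verbatim for any $\alpha$. Invoking \cite{laupoe} (as quoted in the paper) that a multiplicity one logarithmic transformation is determined up to diffeomorphism by the attaching curve of the resulting $2$-handle then upgrades ``the gluing maps agree near the meridian'' to ``the closed manifolds are diffeomorphic.''

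I expect the main obstacle to be the second paragraph: tracking precisely how the $SO(3)$-valued Gluck twisting map behaves under the stabilization cobordism, and verifying that it becomes exactly a multiplicity one regluing rather than some other torus surgery. This requires carefully setting up compatible parametrizations of $N_S$, the stabilizing handle, and $N_T$, and checking that the nontrivial loop in $\pi_1(SO(3))$ is absorbed into a generator of $H_1$ of the complement. The appeal to Cerf's theorem — that the diffeomorphism type of the reconstruction depends only on the attaching curve and not on the particular identification of boundaries used to attach the $2$-handle, $3$-handles, and $4$-handle — is what makes the bookkeeping tractable, exactly as in Larson's proof of \thmref{montesinos}, and I would model the write-up on that argument.
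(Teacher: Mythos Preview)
Your overall strategy---relate the Gluck twist to a torus surgery via the stabilizing handle and then invoke Cerf---matches the paper's in spirit, but the execution diverges and the paper's route is more concrete. The paper does not compare $N_S$ and $N_T$ globally or track gluing maps abstractly; instead it works entirely inside a neighborhood $N_\alpha \cong B^3 \times I$ of the guiding arc. The Gluck twist, restricted to $N_\alpha$, carves out neighborhoods of the two disks $D_0, D_1 = S \pitchfork N_\alpha$ and reglues each by $\tau$; the paper writes down an explicit handle picture of the resulting $4$-ball $B \subset \Sigma_S$ (two dotted circles, each with a $\pm 1$-framed meridian) and then \emph{replaces} $B$ by a second $4$-ball $B' = B^3_1(U) \times I$ with the same boundary identification. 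Cerf's theorem is invoked only for this ball swap. The point is that $B'$ is, by inspection, the local model for a logarithmic transformation along $T = (S \setminus (D_0 \cup D_1)) \cup (U \times I)$, and the multiplicity is read off directly from the fact that the gluing restricts to $\tau$ on each normal disk. No $SO(3)$ bookkeeping is needed.

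Your setup in the first paragraph is not quite right, and this is where your version would run into trouble: $N_T$ is not $N_S$ with a $4$-dimensional $1$-handle attached---check boundaries, since $\partial(N_S \cup \text{$1$-handle}) \cong (S^2 \times S^1) \cs (S^1 \times S^2)$, not $T^3$. The correct relation is that a regular neighborhood of $S \cup \alpha$ is simultaneously $N_S \cup (\text{$1$-handle})$ and $N_T \cup (\text{$2$-handle})$, so the two complements differ by a $1$-handle \emph{and} a $2$-handle. This makes the ``dual'' description in your first paragraph, and hence the gluing-map comparison in the second, more delicate than you suggest. There is also a slip in your multiplicity discussion: gluing $N_T$ back by the identity already sends $\{\pt\} \times S^1$ to the meridian of $T$, which \emph{generates} $H_1(S^4 - N_T)$, so that is multiplicity one (recovering the original $S^4$ with $T$ inside), not ``multiplicity zero.'' The Gluck-twisted regluing is a \emph{different} multiplicity-one surgery, and pinning down which one is exactly the local computation the paper carries out with $B$ and $B'$. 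Your outline could probably be repaired along these lines, but the paper's explicit $4$-ball replacement is what makes the argument clean.
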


\proof Let $\alpha$ be a guiding arc for the stabilization $T$, with neighborhood $N_\alpha$  diffeomorphic to $B^3 \times I$. The intersection $S \pitchfork N_\alpha$ is a pair of disks $D_0 \subset B^3 \times \{0\}$ and $D_1 \subset B^3 \times \{1\}$, with boundaries the unknots $U_1 \subset \partial B^3 \times \{1\}$ and $U_2 \subset \partial B^3 \times \{2\}$. The interiors of these disks can be pushed into the interior of $N_\alpha$, and as noted in Definition \ref{poles}, the endpoints of $\alpha$ can be chosen as the poles of the Gluck twist along $S$. Then, restricted to $N_\alpha$, the Gluck twist removes a neighborhood of both disks $D_0$ and $D_1$, and sews each back ``with a twist" by the map $\tau$ from Definition \ref{poles}.

\fig{200}{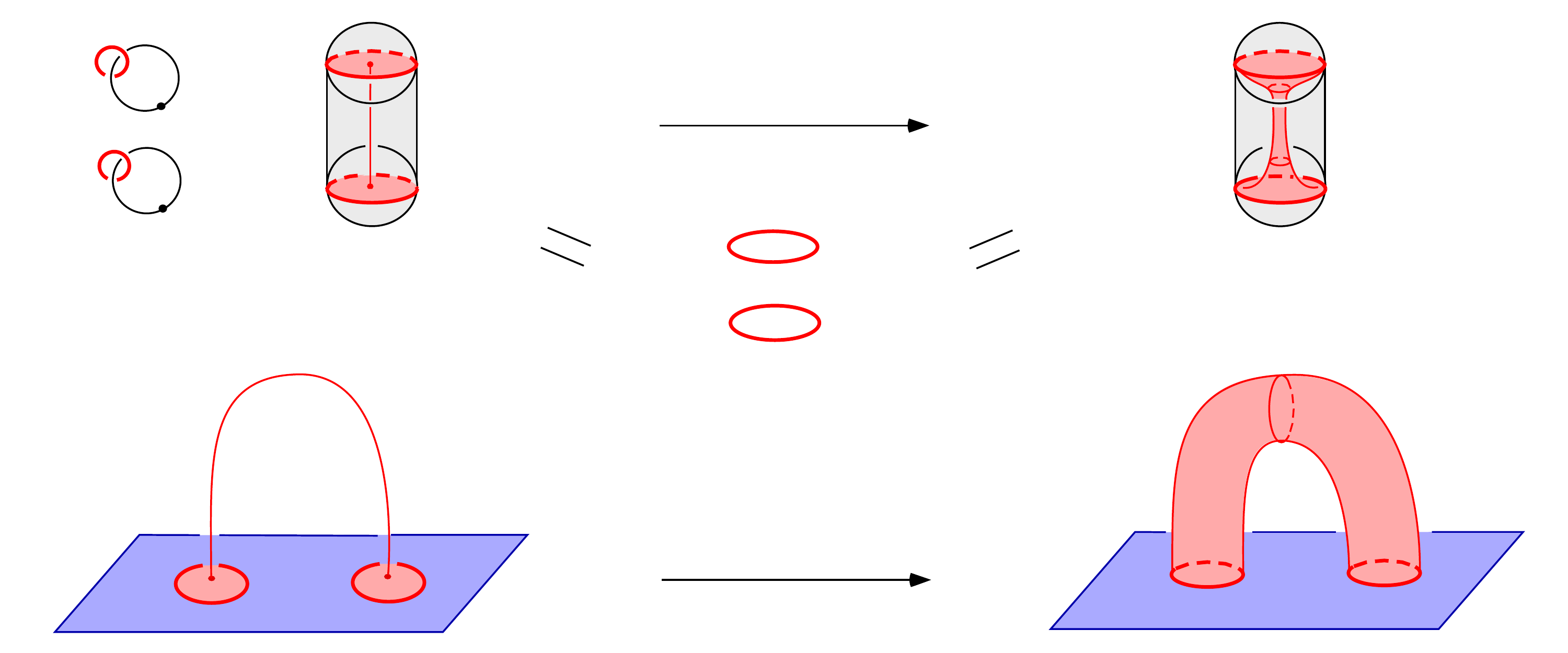}{
\put(-107,115){$B' \subset \Sigma_{T, \phi}$}
\put(-383,115){$B \subset \Sigma_S$}
\put(-450,10){$S$}
\put(-144,12){$T$}
\put(-458,179){\small $1$}
\put(-465,146){\small $-1$}
\put(-410,158){\large $=$}
\put(-402,12){\small $D_0$}
\put(-384,12){\small $D_1$}
\put(-274,178){\small{Replace $4$-ball}}
\put(-264,168){\small{$B$ with $B'$}}
\put(-260,38){\small{Stabilize}}
\put(-248,80){\small{$S^3$}}
\put(-273,98){\small{$-1$}}
\put(-307,131){\small{$\partial$}}
\put(-180,130){\small{$\partial$}}
\put(-266,122){\small{$1$}}
\put(-268,28){\small{the sphere $S$}}
\put(-369,80){$\alpha$}
\caption{As outlined in the proof of Theorem \ref{glucktostab}, removing the $4$-ball $B$ in the Gluck twist $\Sigma_S$ along a sphere $S$ and replacing it with the $4$-ball $B'$ (as illustrated in the top half of the figure) gives the log transform $\Sigma_{T, \phi}$ of $S^4$ along the stabilization $T$ of $S$.}
\label{stab}}

Note that this submanifold $B \subset \Sigma_S$ can be built from the $4$-ball, as shown on the top left of Figure \ref{stab}, by first adding two $1$-handles; this is diffeomorphic to carving out the disks $D_0$ and $D_1$ from the $4$-ball -- see for example \cite[Chapter I.2]{kirby:4-manifolds}. To sew back in the copy of $D_i \times D^2$ via the map $\tau$, note that the normal disk $\{\ast\}  \times D^2$ to the pole $\{\ast\} \in D_i$ has boundary fixed by $\tau$, and so is sent to a curve passing geometrically once over the corresponding $1$-handle. Therefore, attaching a $2$-handle whose core is this disk is diffeomorphic to sewing back in $D_i \times D^2$. The framing $\pm 1$ of each $2$-handle can be computed by looking at the image under $\tau$ of the boundary of each disk parallel to the core $\{\ast\} \in D_i$. 

Cancelling both of the $1,2$-handle pairs in this handle decomposition for $B$ gives a diffeomorphism from $B$ to the $4$-ball. As indicated in Figure \ref{stab}, there is an  identification of $\partial B$ with the $3$-sphere obtained by performing $\pm 1$ surgery on the unknots $U_1, U_2 \subset S^3$. This surgered copy of $S^3$ also arises as the boundary of the product $B'= B^3_1(U) \times I$, where $B^3_1(U)$ denotes $+1$ surgery on the unknot $U \subset B^3$. The $4$-ball $B'$ can be thought of as the result of removing a tubular neighborhood of the annulus $A = U \times I$ properly embedded in $B^3 \times I$, and sewing it back to effect a $+1$ Dehn surgery on each level $B^3$. Consider the manifold $$\Sigma'_S = (\Sigma_S- \interior(B)) \cup B'.$$ 

By Cerf's theorem \cite{cerf}, the Gluck twist $\Sigma_S$ is diffeomorphic to $\Sigma_S'$, since they differ only by removing the $4$-ball $B$ and replacing it with $B'$. Moreover, the identification between $\partial B$ and $\partial B'$ preserves the surgery curves in $S^3$. Therefore $\Sigma'_S$ may also be described as a logarithmic transformation of $T$ (i.e., $\Sigma_S' = \Sigma_{T,\phi}$ for some gluing map $\phi$ as in Definition \ref{logtransform}). For, the torus $T \subset S^4$ is equal to the union of the annuli $S-(D_0 \cup D_1) \subset S^4-N_\alpha$ and $U \times I \subset B^3 \times I$ along their boundaries. In this particular case, the gluing map $\phi$ restricts to the twist $\tau$ on the boundary of each normal disk $\{\pt\} \times D^2 \subset T^2 \times D^2$. It then follows from Definition \ref{logtransformmap} that this logarithmic transformation is of multiplicity one. \qed 

\subsection{Twist roll spun knots}\label{sec:spinningconstructions}

Let $K$ be an embedded knot in $S^3$, and consider a $3$-ball neighborhood $B_p$ of any point $p \in K$ chosen sufficiently small so that $K \pitchfork B_p$ is a single properly embedded trivial arc in $B_p$. Let $k$ denote the properly embedded arc $K \pitchfork B$, where $B = S^3-B_p$. There is a natural slice disk $D_K \subset B^4$ for $K \# -K \subset S^3$ equal to the product $k \times I \subset B \times I$.  

Let $H_K \subset S^3$ denote a tubular neighborhood of the knot $K$ whose boundary $\mathcal T_K \subset S^3$ is the ``swallow-follow torus'' around $K \# -K$ (shown in Figure \ref{fig:swallowfollow}), and let $N_K= \mathcal T_K \times I$ be a collar of the boundary of $H_K$. Identify the handlebody $H_K$ with $S^1 \times D^2$ so that for each $\theta \in S^1$, the curve $\{\theta\} \times \partial D^2$ is a meridian $\mu$ (as in Figure \ref{fig:swallowfollow}) of the knot $K$, and each curve $S^1 \times \{\pt\}$ is a zero-framed longitude $\ell$ of $K$. Consider the diffeomorphisms $\tau_\mu, \tau_\ell : S^1 \times S^1 \times I \to S^1 \times S^1 \times I$ sending $(\theta,\phi,t)$ to $(\theta, \phi + 2 \pi t, t)$ and $(\theta + 2 \pi t, \phi, t)$, respectively. The maps $\rho_\mu, \rho_\ell: S^3 \to S^3$ restricting to the identity on $S^3 - N_K$ and to $\tau_\mu, \tau_\ell$ on $N_K \cong S^1 \times S^1 \times I$ can be thought of as ``Dehn twists along the swallow-follow torus''. 

\begin{figure}[ht]
    \centering
    \includegraphics[width=0.35\textwidth]{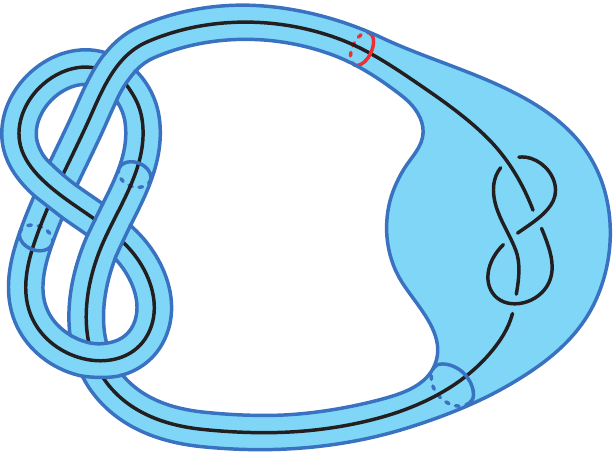}
    \put(-178,57){\small $K$}
    \put(-54,57){\small $-K$}
    \put(-3,25){\small $\mathcal T_K$}
    \put(-76,93){\small $\mu$}
    \caption{The swallow-follow torus $\mathcal T_K \subset S^3$ for $K\#-K$. }%
    \label{fig:swallowfollow}
\end{figure}

\begin{definition}\label{spin}
The \bit{spin} of $K$, denoted $S(K)$, is the 2-sphere obtained by doubling the pair $(B^4,D_K)$, i.e.,
$$(S^4, S(K)) = (B^4, D_K) \cup_h (S^3 \times I, (K \# -K) \times I) \cup_{h^{-1}} (B^4,D_K),$$
where $h: S^3 \to \partial B^4$ is any diffeomorphism. 
More generally, the \bit{$m$-twist $n$-roll spin} of $K$ is the 2-sphere
$$(S^4, S_{m,n}(K)) = (B^4, D_K) \cup_{h\circ\rho_\mu^{m}\circ\rho_\ell^{n}} (S^3 \times I, (K \# -K) \times I) \cup_{h^{-1}} (B^4,D_K),$$
or equivalently,
$$(S^4, S_{m,n}(K)) = (B^4, D_K) \cup_h (S^3 \times I, C_{m,n}) \cup_{h^{-1}} (B^4,D_K)$$
where $C_{m,n}(K) \subset S^3 \times I$ is the trace of the knot $K \# -K$ under any ambient isotopy from $\id_{S^3}$ to the composition $\rho_\mu^m \circ  \rho_\ell^n$ of the maps defined above. 
\end{definition}

Up to diffeomorphism, the resulting $2$-sphere is independent of the choice of isotopy. However, for visualization purposes we prefer the ambient isotopy induced (via the isotopy extension theorem) by $m$ meridional rotations of the solid torus $H_K$ followed by $n$ rotations in the $S^1$ direction.  
    
As noted in the introduction, the families of spun knots, twist spun knots, and twist roll spun knots are due to \cite{artin}, \cite{zeeman}, and \cite{litherland} respectively. The original constructions ``spin" a knotted arc $k$ through the $3$-ball pages of an open book decomposition of $S^4$ with binding the unknotted $2$-sphere. Our constructions in Definition \ref{spin} via concordances are equivalent, and work better for us in this context. 

\begin{remark} 
By definition, $S_{0,0}(K) = S(K)$. It was shown by \cite{zeeman} that for any knot $K \subset S^3$, the twist spins $S_{\pm 1,0}(K)$ are isotopic to the unknot. If $K \subset S^3$ is \emph{not} a torus knot, then by Teragaito \cite{teragaito-roll}, the roll spin $S_{0,n}(K)$ is non-trivial for any $n$. Furthermore, if $K$ is fibered, then its roll spins are pairwise distinct, and also non-ribbon. In \cite{teragaito-twist}, Teragaito also shows that if $K$ is not a torus knot, $S_{m,n}(K)$ is non-trivial when $|n| \geq 2$. On the other hand, work of Litherland \cite[Corollary 6.4]{litherland} shows that the twist roll spin $S_{m,n}(T_{p,q})$ of a torus knot $T_{p,q}$ is isotopic to $S_{m-npq, 0}(K)$, and hence is non-trivial if and only if $m-npq \not = \pm 1$. 
\end{remark}

\begin{figure}[ht]
    \centering
    \includegraphics[width=0.8\textwidth]{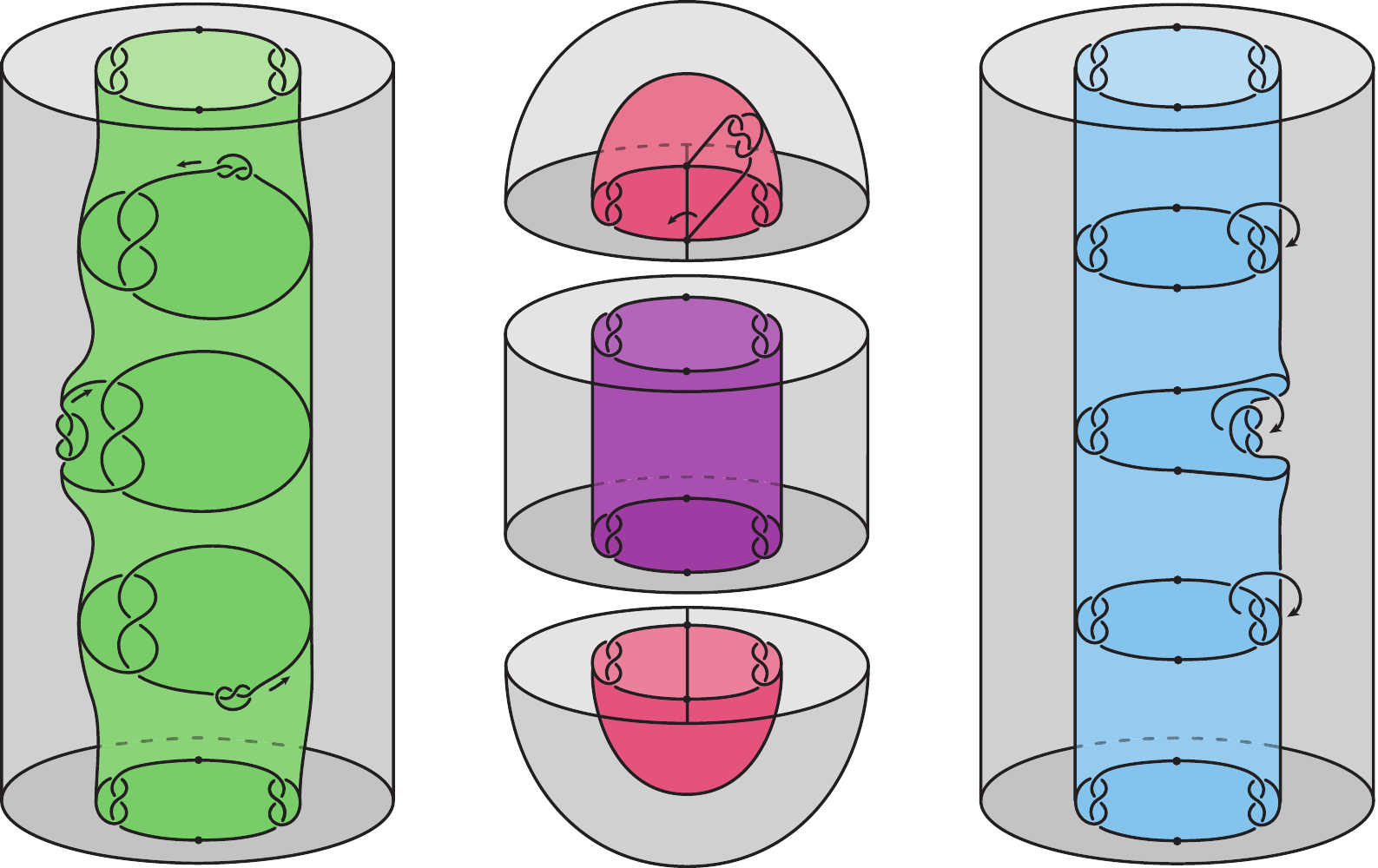}
    \put(-205,115){\small $C_{0,0}(K)$}
    \put(-195,-15){\small $D_K$}
    \put(-205,245){\small $-D_K$}
    \put(-70,-15){\small $C_{1,0}(K)$}
    \put(-340,-15){\small $C_{0,1}(K)$}
    \caption{Schematics for the various spinning constructions: the spin of a knot $K$ viewed as two copies of the slice disk $D_K \subset B^4$ for $K \cs -K \subset S^3$ glued to the product concordance $C_{0,0}(K) \subset S^3\times I$ (center), the twist spin concordance $C_{1,0}(K)$ (right), and the roll spin concordance $C_{0,1}(K)$ (left).  }%
    \label{fig:spinconcordances}
\end{figure}

Many of the methods used to prove that spun or twist spun knots have standard Gluck twists do not obviously extend to roll spun knots. For instance, Gluck \cite{gluck:2-spheres} observed that a 2-sphere has a standard Gluck twist if it bounds a Seifert 3-manifold in $S^4$ over which the rotation $r_\theta$ of the sphere extends. This shows, in particular, that spun knots have standard Gluck twists. However, it is not clear that twist or roll spun knots bound such $3$-manifolds. 

It also follows that Gluck twists of spun knots are standard because all spun knots are ribbon (see the discussion below Definition \ref{poles}). Yet again, this argument fails to apply to both twist and roll spins, since both twist spun \cite{cochran} and roll spun knots \cite{teragaito-roll} are generally not ribbon. Recently, Sunukjian \cite{sunukjianconc} produced families of twist spins that are not even $0$-concordant to the unknot; more examples were later given by both Dai-Miller \cite{daimiller} and Joseph \cite{jjo}. None of their techniques, however, apply to roll spun knots; hence it remains unknown whether roll spins are 0-concordant to the unknot. This open problem poses an avenue through which to study the Gluck twists of roll spins that is complementary to the techniques we use in this paper.

In \cite{plotnick}, Plotnick  generalized the family of homotopy $4$-spheres obtained by Gluck twisting twist roll spun knots, by removing and regluing a tubular neighborhood of the twist roll spun knot plumbed together with a tubular neighborhood of the unknotted sphere in $S^4$ about which it is spun. However, as he points out below Corollary $6.2$, his argument that many of these manifolds are standard fails for Gluck twists on twist roll spun knots.

\section{Main Results} 

\begin{theorem}\label{thm:singlefmwm}
Let $S \subset S^4$ be a $2$-sphere with a regular homotopy to the unknot consisting of one finger and one Whitney move. Then, the Gluck twist $\Sigma_S$ is diffeomorphic to the $4$-sphere. 
\end{theorem}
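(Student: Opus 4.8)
The plan is to combine the two main ingredients assembled in the preliminaries: Theorem~\ref{guys} (Joseph--Klug--Ruppik--Schwartz), which converts a one-finger, one-Whitney regular homotopy into a single \emph{embedded} stabilization, and Theorem~\ref{glucktostab} (Iwase), which identifies the Gluck twist with a multiplicity one logarithmic transformation on \emph{any} stabilization of $S$. First I would apply Theorem~\ref{guys} to the hypothesized regular homotopy from $S$ to the unknot: this produces a stabilization $T$ of $S$ (an embedded torus in $S^4$) that is smoothly isotopic to the unknotted torus. Next I would apply Theorem~\ref{glucktostab} to this particular stabilization $T$, concluding that $\Sigma_S$ is diffeomorphic to a multiplicity one logarithmic transformation $\Sigma_{T,\phi}$ on $T$. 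Since $T$ is isotopic to the unknotted torus, $\Sigma_{T,\phi}$ is a multiplicity one logarithmic transformation on the unknotted torus, which is diffeomorphic to $S^4$ by Theorem~\ref{montesinos} (Montesinos--Larson). Chaining these diffeomorphisms gives $\Sigma_S \cong S^4$.

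The one point that needs care — and which I expect to be the main (if modest) obstacle — is verifying that the notion of ``multiplicity one logarithmic transformation'' is genuinely an invariant of the unknotted torus alone, so that the isotopy $T \simeq$ unknotted torus carries $\Sigma_{T,\phi}$ to a standard multiplicity one log transform. This is where I would lean on the discussion following Definition~\ref{logtransformmap}: by \cite{laupoe}, the surgered manifold is determined up to diffeomorphism by the homology class of the attaching curve of the $2$-handle, i.e.\ by the multiplicity together with the choice of framing data on the torus, and an ambient isotopy of $T$ carries the neighborhood $N \cong T^2 \times D^2$ and the gluing data along with it. So the only subtlety is matching up the product framings; one checks, as in the proof of Theorem~\ref{montesinos} (via Cerf), that multiplicity one pins this down uniquely. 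I would state this as a short lemma or simply cite the relevant line of the preliminaries.

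A secondary check is that the hypothesis of Theorem~\ref{thm:singlefmwm} is exactly the hypothesis of Theorem~\ref{guys}: both ask for a regular homotopy in $S^4$ from $S$ (a $2$-knot) to the unknotted $2$-sphere consisting of precisely one finger move and one Whitney move, so no translation is required. The proof is therefore essentially a two-line composition of cited results, and I would write it accordingly, emphasizing that the new content of the paper lies not in this theorem but in exhibiting $2$-spheres — the $m$-twist $n$-roll spins of unknotting number one knots — that satisfy its hypothesis.
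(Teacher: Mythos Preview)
Your proposal is correct and follows exactly the same three-step composition as the paper: Theorem~\ref{guys} gives a stabilization isotopic to the unknotted torus, Theorem~\ref{glucktostab} (Iwase) converts $\Sigma_S$ to a multiplicity one log transform on that stabilization, and Theorem~\ref{montesinos} finishes. The subtlety you flag about multiplicity being preserved under the isotopy is not raised in the paper---an ambient isotopy of $T$ carries the tubular neighborhood and meridian along with it, so the multiplicity (which is defined via $H_1(S^4 - N)$) is automatically preserved---but your caution there is harmless.
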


The results from \cite{theguys}, \cite{iwase}, and \cite{montesinos} used in the proof below are stated explicitly in the previous section; see Theorems \ref{guys}, \ref{glucktostab}, and \ref{montesinos} respectively. 

\begin{proof}
There is a stabilization of the sphere $S$ isotopic to the unknotted torus (the one bounding a solid handlebody). This follows from \cite{theguys} using the fact that $S$ is related to the unknot via only a single finger and Whitney move. It then follows from Iwase \cite{iwase} that the Gluck twist $\Sigma_S$ is diffeomorphic to a multiplicity one logarithmic transform of the unknotted torus. Therefore, $\Sigma_S$ is diffeomorphic to the $4$-sphere, by Montesinos \cite{montesinos}. \end{proof}

\begin{remark}\label{rmk:glucktwists}
More generally, suppose that $S, T \subset S^4$ are embedded spheres with a regular homotopy from $S$ to $T$ consisting of one finger and one Whitney move. Then, there is an immersed sphere in $S^4$ with a single pair of double points that can be cancelled by a Whitney move along either a Whitney disk leading to the sphere $S$, or a Whitney disk leading to $T$. Suppose that these Whitney disks share a boundary arc; such a regular homotopy is called ``arc-standard" in \cite{theguys}. In this case, our argument for the theorem above applies to show that the Gluck twists $\Sigma_S$ and $\Sigma_T$ are diffeomorphic. Indeed, the proof of \cite[Theorem A]{theguys} (in particular Figure $9$) shows that there is a torus ``destabilizing'' to the spheres $S$ and $T$ by compressing along different disks with the same boundary. Therefore, the gluing maps of the logarithmic transformations giving $\Sigma_S$ and $\Sigma_T$ are equal. It would be interesting to understand the family of knotted 2-spheres admitting such a sequence of finger/Whitney move pairs leading to the unknot, and also how this family overlaps with the set of spheres $0$-concordant to the unknot (note that the 2-twist spun trefoil admits such a sequence of finger/Whitney move pairs by Lemma \ref{lem:rollspunhomotopy}, but this 2-knot is known not to be 0-concordant to the unknot). 
\end{remark}

\begin{lemma}\label{lem:rollspunhomotopy}
Let $S_{m,n}(K) \subset S^4$ be the $m$-twist $n$-roll spin of a knot $K \subset S^3$ with unknotting number $\mu$. Then, there is a regular homotopy from $S_{m,n}(K)$ to the unknotted $2$-sphere consisting of $\mu$ finger moves followed by $\mu$ Whitney moves. 
\end{lemma}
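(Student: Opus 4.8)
The plan is to realize the twist roll spin $S_{m,n}(K)$ as the double of the slice disk $D_K$ glued along a concordance $C_{m,n}(K)$, and to produce the desired regular homotopy by first simplifying $K \cs -K$ inside this concordance via crossing changes. Since $K$ has unknotting number $\mu$, there is a sequence of $\mu$ crossing changes converting $K$ to the unknot; the mirror sequence converts $-K$ to the unknot, so altogether $2\mu$ crossing changes (dually $\mu$ on $K$ together with their mirrors) convert $K \cs -K$ to the two-component unlink, or rather to the unknot $K \cs -K \rightsquigarrow U$. The first step is to observe that each such crossing change, performed in a single $S^3$ cross-section of the spun picture, is exactly the effect of one finger move along a guiding arc, at the cost of introducing a cancelling pair of double points — this is precisely the local model in Figure \ref{fig:fingerwhitneymove}. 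So $\mu$ crossing changes realizing $K \cs -K \to U$ (using that $K \cs -K$ becomes unknotted after $\mu$ crossing changes on the $K$-summand, whose mirrors are automatically achieved by the symmetry of the construction) give $\mu$ finger moves taking $S_{m,n}(K)$ to an immersed sphere $S'$ with $\mu$ cancelling double point pairs.

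The second step is to identify what $S'$ is after these finger moves: once $K \cs -K$ is unknotted, the resulting sphere is regularly homotopic to the $m$-twist $n$-roll spin of the \emph{unknot}, i.e. to $S_{m,n}(U)$. The key point here is that twisting and rolling along the swallow-follow torus of the unknot $U \cs -U = U$ is isotopically trivial: the maps $\rho_\mu, \rho_\ell$ are supported near a torus bounding a solid torus on the unknot side, and for the unknot these Dehn twists extend over a solid handlebody, so $C_{m,n}(U)$ is the product concordance and $S_{m,n}(U) = S(U)$ is the unknotted $2$-sphere. Hence after the $\mu$ finger moves we have an immersed sphere that is, up to isotopy, the unknotted sphere with $\mu$ trivial pairs of double points, and these $\mu$ pairs can be removed by $\mu$ Whitney moves along the Whitney disks that are the geometric duals of the guiding arcs used for the finger moves. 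Concatenating gives a regular homotopy $S_{m,n}(K) \rightsquigarrow \text{unknot}$ consisting of $\mu$ finger moves followed by $\mu$ Whitney moves, as claimed.

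The main obstacle I anticipate is making rigorous the claim that performing the crossing changes ``in a cross-section'' of the spun $2$-knot corresponds to genuine finger moves on the embedded $2$-sphere $S_{m,n}(K)$ — one must check that the guiding arcs can be chosen embedded with interiors disjoint from the sphere, that the $\mu$ finger moves can be done simultaneously (disjointly), and that after the crossing changes the surface is literally $S_{m,n}(U)$ rather than merely homotopic to it. The cleanest way to handle this is to push all the crossing-change activity into one page $S^3 \times \{t_0\}$ of the spun picture, where the $2$-sphere meets that $S^3$ in the knot $C_{m,n}(K)|_{t_0}$, a copy of $K \cs -K$ (possibly re-embedded by the twist/roll, but still a knot isotopic to $K\cs -K$); a crossing change on a knot in $S^3$ is supported in a ball, and the trace of that crossing change through a small time-interval $S^3 \times [t_0 - \epsilon, t_0+\epsilon]$ is exactly a finger move on the surface along an arc dual to the crossing-change band. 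Doing the $\mu$ changes in $\mu$ disjoint sub-balls of that single page makes them disjoint finger moves. A secondary technical point is to confirm that the resulting unknottedness statement for $S_{m,n}(U)$ is the one in the literature (it follows from $S_{0,0}(U)$ being unknotted together with Zeeman's/Litherland's observations that twisting and rolling the unknot are trivial); I would cite \cite{litherland} for this. Once these points are pinned down, the rest is the elementary identification of crossing changes with finger moves and their inverses with Whitney moves.
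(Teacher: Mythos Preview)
Your outline has the right opening move---do crossing changes in a single $S^3$ level of the concordance to realize $\mu$ finger moves on $S_{m,n}(K)$---but the second step contains a genuine gap. You claim that after these $\mu$ finger moves the immersed sphere is (an immersed copy of) $S_{m,n}(U)$, and hence unknotted with trivial double points. Two things go wrong here. First, $\mu$ crossing changes on the $K$-summand of $K\cs -K$ yield $U\cs -K=-K$, not the unknot; there is no ``symmetry of the construction'' that mirrors a crossing change on one summand to the other---a finger move is a local operation in $S^4$, and the spun sphere has no such involution taking one summand to the other in a way compatible with the guiding arcs. Second, and more structurally, even if you managed to unknot $K\cs -K$ in the middle level, the caps of the sphere are still the slice disks $\pm D_K$, not $\pm D_U$. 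So the immersed sphere you obtain is not $S_{m,n}(U)$; forgetting the double points you get something built from $\pm D_K$ and a (simplified) concordance, which is a priori only the \emph{spin of $K$}, not the unknotted sphere.

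The paper's proof confronts exactly this issue and resolves it with a two-sided argument. One performs $\mu$ finger moves on $S_{m,n}(K)$ unknotting the $-K$ summand (the one being rolled/twisted), so the concordance becomes the product $(K\cs U)\times I$ with caps $\pm D_K$. Separately, one performs $\mu$ finger moves on the unknotted sphere $S_{0,0}(U)$, \emph{knotting up} one $U$ summand into $K$, yielding the same product $(K\cs U)\times I$ but with caps $\pm D_U$. The remaining work---and the content you are missing---is an explicit ambient isotopy between these two immersed spheres, obtained by sliding the double points through the product structure of the disks to convert $\pm D_U$ into $\pm D_K$. Only after this isotopy does one get that the $\mu$ finger moves on $S_{m,n}(K)$ and the $\mu$ Whitney moves (inverses of the finger moves on $S_{0,0}(U)$) concatenate to the desired regular homotopy. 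Your appeal to ``$S_{m,n}(U)$ is unknotted'' short-circuits precisely this step and cannot be made to work with only $\mu$ finger moves as stated.
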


\begin{proof}
Observe that the spun knot $S_{0,0}(U)$, where $U \subset S^3$ is the unknot, is the unknotted $2$-sphere. We will show that there are regular homotopies consisting of $\mu$ finger moves from both $S_{m,n}(K)$ and $S_{0,0}(U)$ to immersions $S_{m,n}'(K)$ and $S_{0,0}'(U)$ that are ambiently isotopic in $S^4$. Since a Whitney move is the inverse of a finger move, this is sufficient to complete the proof. The level cross sections of the concordances $C_{m,n}(K)$ and $C_{0,0}(U)$ are illustrated in Figure \ref{fig:concordancelevelsets}. Recall that from this point of view, performing a finger move amounts to changing a crossing in some level set, at the expense of adding a pair of algebraically cancelling double points in the nearby level sets (see Figure \ref{fig:fingerwhitneymove}). 

\begin{figure}[ht]
    \centering
    \includegraphics[width=0.7\textwidth]{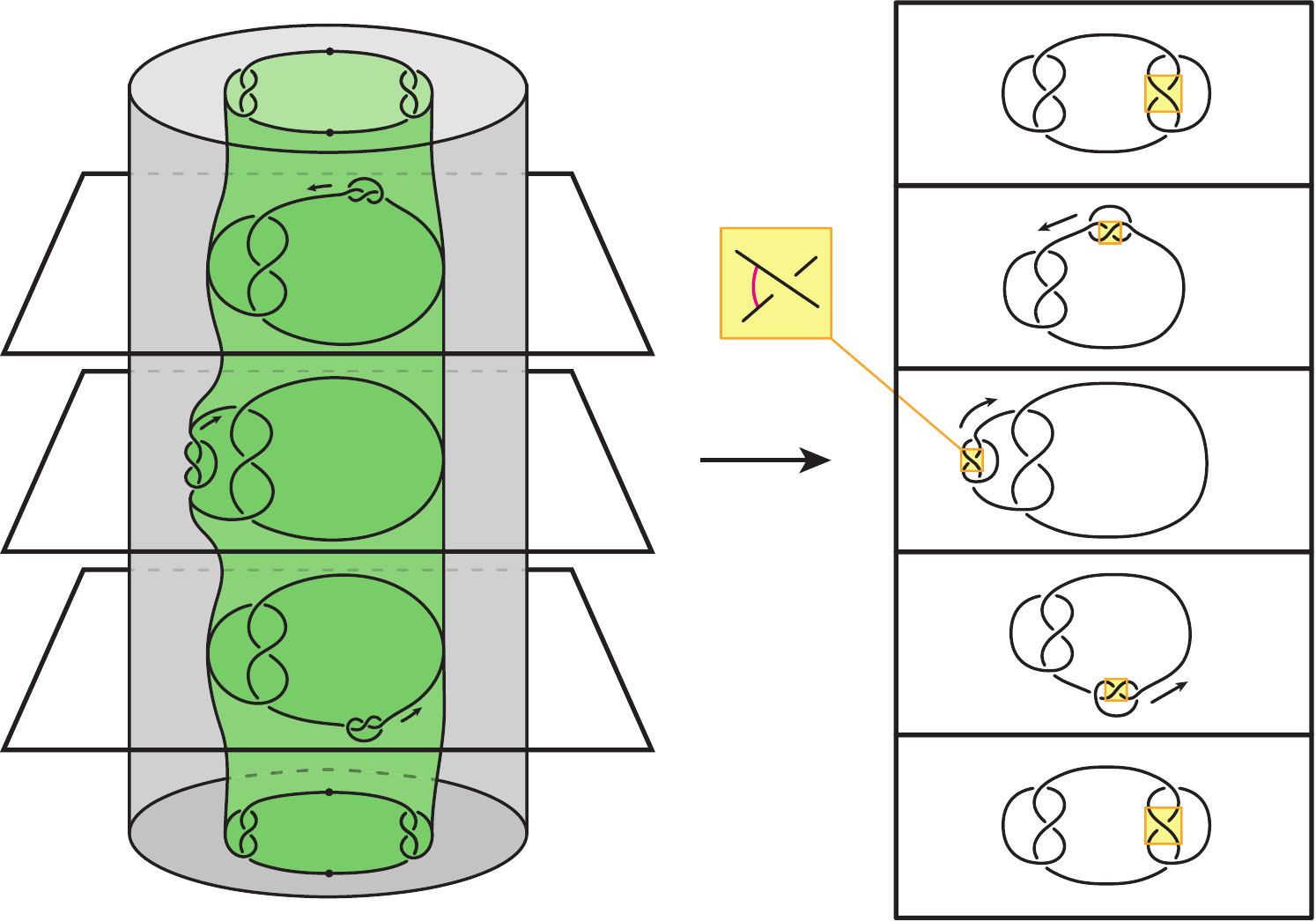}
    \put(-262,-10){\small $C_{0,1}(K)$}
    \put(-138,135){\small $\alpha$}
    \put(-93,203){\small $K$}
    \put(-24,203){\small $-K$}
    \caption{A schematic for the concordance $C_{0,1}(K)$. The trefoil has unknotting number one; the crossing change that unknots it is highlighted, as well as a guiding arc $\alpha$ for this crossing.}%
    \label{fig:concordancelevelsets}
\end{figure}

Since $K$ has unknotting number $\mu$, there is a projection of $K$ such that $\mu$ crossing changes take $-K$ to the unknot. Let $\alpha_1,\alpha_2, \dots, \alpha_\mu$ be guiding arcs corresponding to these crossing changes (drawn in pink in Figure \ref{fig:concordancelevelsets}) with endpoints on the summand $-K$ in a middle level set of the concordance $C_{m,n}(K)$. Performing finger moves along these guiding arcs has the effect of transforming the $-K$ summand in this level set into the unknot, producing the immersed 2-sphere $S_{m,n}'(K)$ in Figure \ref{fig:movie} (c) with $2\mu$ double points. An ambient isotopy of this immersion moves its positive and negative double points through $S^3 \times I$ to the boundary of the disks $D_K$ and $-D_K$ respectively. After this isotopy, what remains in $S^3 \times I$ is simply the product $(S^3\times I,(K\# U)\times I$). Indeed, the finger moves unknot the summand of $K \cs -K$ that is ``rolled'' and ``twisted'' in the concordance, as in Figures \ref{fig:movie} (b) and (c). 

Now, the $\mu$ crossing changes taking $K$ to the unknot $U$ can be viewed ``backwards'' as crossing changes on some projection of $U$ giving $K$. We can then perform $\mu$ finger moves on $S_{0,0}(U)$ along guiding arcs corresponding to these crossing changes with endpoints on the summand $U$ (rather than $-U$) in a middle level set of the concordance $C_{0,0}(U)$. As above, performing finger moves along these guiding arcs has the effect of transforming the $U$ summand in this level set into the knot $K$, producing the immersed 2-sphere $S_{0,0}'(U)$ in Figure \ref{fig:movie} (a) with $2\mu$ double points. We can isotope the positive and negative double points to the boundary of the disks $-D_U$ and $D_U$ respectively. Again, what is left in $S^3 \times I$ is a product $(S^3\times I, K\cs U)$. Recall from Definition \ref{spin} that the disk $D_K$ is identified with the product $K \times I \subset B^3 \times I$. This product structure allows us to push the double points of the immersion $S_{0,0}'(U)$ through the interior of the disks $\pm D_U$ to lie on the opposite summand of their boundaries, changing the interior of the disks $\pm D_U$ into $\pm D_K$. The result of this isotopy is illustrated in Figure \ref{fig:movie} (a) and (b). This gives an ambient isotopy in $S^4$ from $S_{0,0}'(U)$ to $S_{m,n}'(K)$, as desired. \end{proof}

\begin{figure}[ht]
    \centering
    \includegraphics[width=0.65\textwidth]{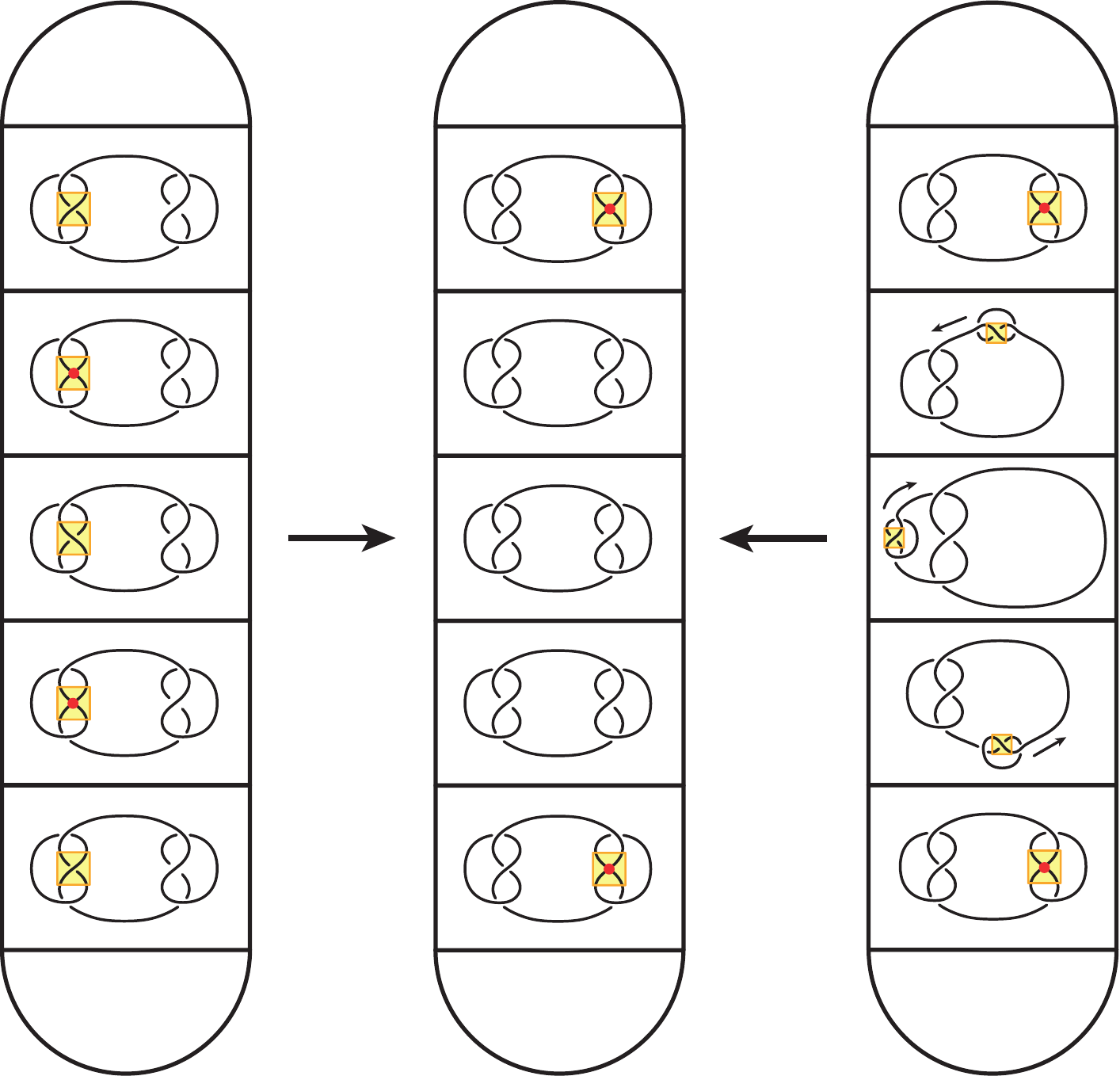}
    \put(-277,-15){(a)}
    \put(-277,17){\small $D_U$}
    \put(-282,270){\small $-D_U$}
    \put(-159,-15){(b)}
    \put(-159,17){\small $D_K$}
    \put(-164,270){\small $-D_K$}
    \put(-41,-15){(c)}
    \put(-41,17){\small $D_K$}
    \put(-46,270){\small $-D_K$}
    \put(-350,143){\small $S_{0,0}'(U)$}
    \put(8,143){\small $S_{m,n}'(K)$}
    \caption{Movies illustrating the three immersions of a 2-sphere in $S^4$ used in the proof of Theorem \ref{lem:rollspunhomotopy}. In (a), $S_{0,0}'(U)$, the result of performing $\mu$ finger moves on the unknotted $2$-sphere $S_{0,0}(U)$. In (c), $S_{m,n}'(K)$: the result of performing $\mu$ finger moves on the twist roll spin $S_{m,n}(K)$. The immersion in (b) is ambiently isotopic in $S^4$ to the immersions in both (a) and (c).}%
    \label{fig:movie}
\end{figure}

\begin{theorem} \label{main}
For any $n,m \in \mathbb{Z}$ and unknotting number one knot $K \subset S^3$, the Gluck twist of the $m$-twist $n$-roll spin $S_{m,n}(K)$ is diffeomorphic to the $4$-sphere. 
\end{theorem}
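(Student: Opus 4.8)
The plan is to combine the three results already assembled in this section. By Lemma~\ref{lem:rollspunhomotopy}, applied with $\mu = 1$ since $K$ has unknotting number one, there is a regular homotopy from $S_{m,n}(K)$ to the unknotted $2$-sphere consisting of exactly one finger move followed by one Whitney move. This is precisely the hypothesis of Theorem~\ref{thm:singlefmwm}.

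So the proof is a single sentence: since $K$ has unknotting number one, Lemma~\ref{lem:rollspunhomotopy} gives a regular homotopy from $S_{m,n}(K)$ to the unknot consisting of one finger and one Whitney move, and then Theorem~\ref{thm:singlefmwm} immediately yields that $\Sigma_{S_{m,n}(K)}$ is diffeomorphic to $S^4$. Equivalently, one can unwind Theorem~\ref{thm:singlefmwm} into its constituent ingredients: the single finger/Whitney homotopy produces (by Theorem~\ref{guys}) a stabilization of $S_{m,n}(K)$ isotopic to the unknotted torus; Iwase's theorem (Theorem~\ref{glucktostab}) identifies the Gluck twist with a multiplicity one logarithmic transformation on that stabilization; and Montesinos--Larson (Theorem~\ref{montesinos}) says any multiplicity one log transform on the unknotted torus is $S^4$.

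There is essentially no obstacle at this stage — all the work has been front-loaded into Lemma~\ref{lem:rollspunhomotopy} (the delicate isotopy of double points through the slice disks $\pm D_K$) and into the cited Theorems~\ref{guys},~\ref{glucktostab},~\ref{montesinos}. The only point requiring a moment's care is checking that the count matches: unknotting number one means $\mu=1$, so Lemma~\ref{lem:rollspunhomotopy} delivers exactly \emph{one} finger and \emph{one} Whitney move, which is what Theorem~\ref{thm:singlefmwm} requires; for unknotting number $\mu \geq 2$ the argument would not apply since Theorem~\ref{guys} only handles a single finger/Whitney pair. I would therefore simply state: this is immediate from Lemma~\ref{lem:rollspunhomotopy} (with $\mu=1$) and Theorem~\ref{thm:singlefmwm}.
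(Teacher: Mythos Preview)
Your proposal is correct and matches the paper's own proof essentially verbatim: apply Lemma~\ref{lem:rollspunhomotopy} with $\mu=1$ to obtain a one-finger/one-Whitney regular homotopy to the unknot, then invoke Theorem~\ref{thm:singlefmwm}. The additional unwinding into Theorems~\ref{guys},~\ref{glucktostab},~\ref{montesinos} is faithful to how Theorem~\ref{thm:singlefmwm} is proved, but the paper's proof of Theorem~\ref{main} does not repeat that and simply cites the two results.
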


\begin{proof}
Since $K$ has unknotting number one, then by Lemma \ref{lem:rollspunhomotopy}, there is a regular homotopy from $S_{m,n}(K)$ to the unknotted 2-sphere consisting of a single finger and Whitney move. Hence by Theorem \ref{thm:singlefmwm}, the Gluck twist of $S_{m,n}(K)$ is diffeomorphic to $S^4$.
\end{proof}

\begin{remark}\label{rmk:higherunknottednumber}
To extend Theorem \ref{main} to a knot $K$ with a higher unknotting number, one might hope that the regular homotopy from Lemma \ref{lem:rollspunhomotopy} can be modified to consist of a sequence of finger/Whitney move pairs (rather than performing all finger moves first, and then all Whitney moves). If such a regular homotopy were arc-standard (as in Remark \ref{rmk:glucktwists} above), it would follow that the Gluck twist of $S_{m,n}(K)$ is standard. However, it is not clear from our construction that this can be achieved, since performing all of the finger moves simultaneously is what allows us to unknot one summand of the connected sum $K \cs -K$. 
\end{remark}

One immediate application of our results stems from recent work of Gompf \cite{gompf:infinite-order}. Given an integer $m \in \mathbb{Z}$ and a knot $K \subset S^3$,  Gompf constructs an infinite family of compact $4$-manifolds $C_m(K)$ by starting with the complement $B^4- N_K$ of a tubular neighborhood $N_K \cong D^2 \times D^2$ of the slice disk $D_K$ from Definition \ref{spin}, and then attaching a $2$-handle with framing $m$ to a meridian of $D_K$ in the boundary of $B^4- N_K$. 
\begin{figure}[ht]
    \centering
    \includegraphics[width=0.75\textwidth]{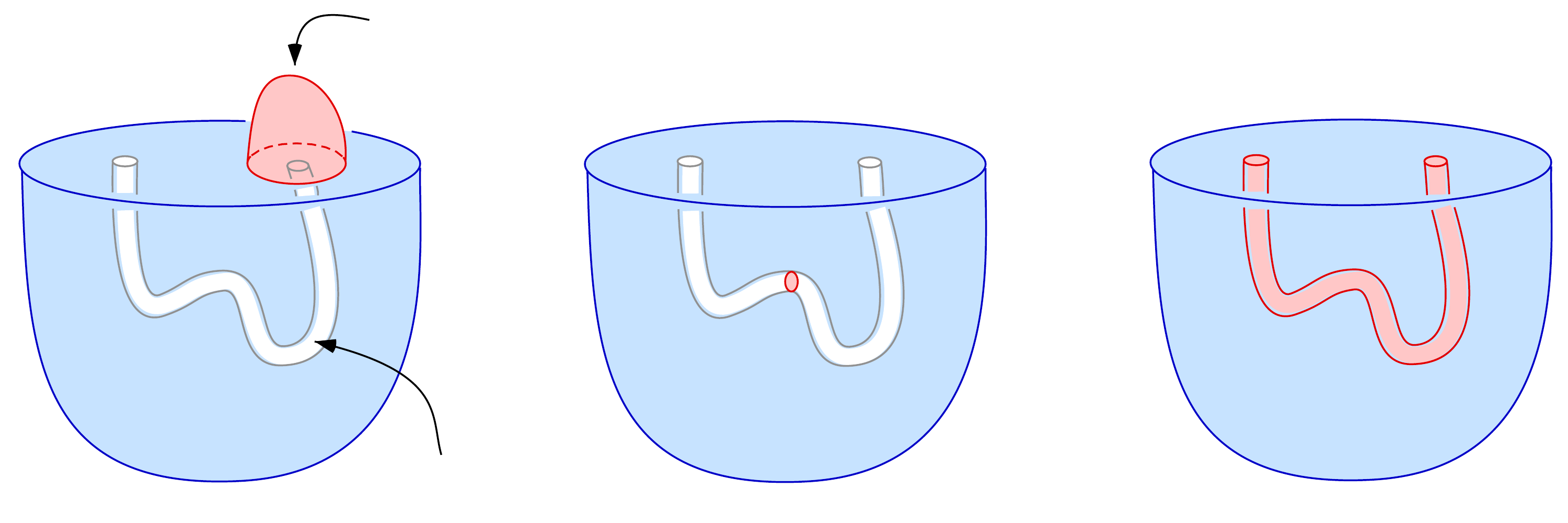}
    \put(-243,50){$=$}
    \put(-116,50){$=$}
    \put(-326,20){\small $B^4-N_K$}
    \put(-258,4){\small $N_K$}
    \put(-265,110){\small $2$-handle}
    \caption{Three equivalent depictions of the manifold $C_m(K)$. In particular, the complement $B^4 - N_K$ of a tubular neighborhood of the slice disk $D_K$ together with: a $2$-handle attached along the meridian of $K \cs -K \subset S^3$ with framing $m$ (left), a $2$-handle attached along the meridian of $D_K$ (center), a copy of $D^2 \times D^2$ glued in by the map $\tau^m$ (right).}%
    \label{fig:gompf}
\end{figure}
This $2$-handle attachment is equivalent to gluing the lower ``hemisphere'' $D^2 \times S^1 \subset S^2 \times S^1$ to $D^2 \times S^1 \cong  N_K \cap (B^4 - N_K)$ via the restriction of the map $\tau^m$ from Section \ref{sec:gluck}, as illustrated in Figure \ref{fig:gompf}. Each $C_m(K)$ has an associated family of ``twisted doubles''
$$\mathcal D_{m,n}(K) = C_m(K) \cup_{f^n} -C_m(K)$$

\noindent for each integer $n \in \mathbb{N}$, where $f : -\partial C_m(K) \to \partial C_m(K)$ is the ``Dehn twist'' along the longitude of the swallow-follow torus for $K \cs -K$, similar to the map $\rho_\ell$ used in Definition \ref{spin}. 

Since each $C_m(K)$ is contractible, each of its twisted doubles $\mathcal D_{m,n}(K)$ is a homotopy $4$-sphere. In fact, Tange \cite{tange2020} showed that $\mathcal D_{m,n}(K)$ is diffeomorphic to $S^4$ when $m$ is even, and diffeomorphic to the Gluck twist on \emph{some} $2$-sphere $S \subset S^4$ when $m$ is odd. That $S$ is equal to the $n$-roll spin of $K$ was first observed for honest doubles with $n=0$ by Gompf \cite{gompf:infinite-order}, and for certain cases with $n=1$ by Akbulut \cite{akbulut1}. This can be seen in general by constructing $\mathcal D_{m,n}(K)$ in two steps: first, glue two copies of the complement $B^4- N_K$ to each other along the complement of $K \subset S^3$, using the map $f^n$, to obtain the complement of a neighborhood of the $n$-roll spin of $K$. Second, add the remaining two $2$-handles. Since the attaching regions of the $2$-handles are identified in the double, this amounts to gluing in a copy of $S^2 \times D^2$ via the map $\tau^m$. The claim follows since the map $\tau$ has order $2$ by Gluck \cite{gluck:2-spheres}, and so $\tau^m$ is isotopic to the identity when $m$ is even, and isotopic to the map $\tau$ when $m$ is odd.\footnote{We thank Paul Melvin for many discussions about this.}

\begin{corollary}
For all $n,m \in \mathbb{Z}$ and any knot $K \subset S^3$ with unknotting number one, Gompf's twisted double $\mathcal D_{m,n}(K)$ is standard. 
\end{corollary}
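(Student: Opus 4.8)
The plan is to reduce the corollary immediately to \thmref{main}, using the structural facts about Gompf's twisted doubles recalled in the paragraph preceding the statement. I would split into the two parity cases for $m$. When $m$ is even, Tange's theorem \cite{tange2020} already asserts that $\mathcal D_{m,n}(K)$ is diffeomorphic to $S^4$ with no hypothesis on $K$ whatsoever, so that case is immediate. The content is therefore entirely in the case $m$ odd.

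For $m$ odd, I would invoke the two-step description of $\mathcal D_{m,n}(K)$ given above: gluing two copies of $B^4 - N_K$ along $S^3 - N(K)$ by the power $f^n$ of the longitudinal swallow-follow Dehn twist yields the exterior of the $n$-roll spin $S_{0,n}(K) \subset S^4$, and attaching the remaining two $2$-handles amounts to regluing a copy of $S^2 \times D^2$ by $\tau^m$. Since $\tau$ has order two by Gluck \cite{gluck:2-spheres}, $\tau^m$ is isotopic to $\tau$ for $m$ odd, so this regluing is exactly a Gluck twist; that is, $\mathcal D_{m,n}(K) \cong \Sigma_{S_{0,n}(K)}$. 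Then I would apply \thmref{main} with twist parameter $0$ and roll parameter $n$: since $K$ has unknotting number one, $\Sigma_{S_{0,n}(K)}$ is diffeomorphic to $S^4$. Combining the two parity cases gives that $\mathcal D_{m,n}(K)$ is standard for all $m,n \in \mathbb{Z}$.

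There is essentially no obstacle here beyond bookkeeping, since all the real work is done by \thmref{main} (which in turn rests on \thmref{guys}, \thmref{glucktostab}, and \thmref{montesinos}). The only point I would take care with is the identification $\mathcal D_{m,n}(K) \cong \Sigma_{S_{0,n}(K)}$ itself: one must check that the parametrization of the glued-in $S^2 \times D^2$ matches the one used to define the Gluck twist, so that the regluing map really is (isotopic to) the Gluck map $\tau$ rather than some other automorphism of $S^2 \times S^1$. This is handled by the order-two property of $\tau$ together with the fact, noted after \defref{poles}, that the neighborhood parametrization entering the Gluck construction is unique up to isotopy; so no additional argument is needed and the proof is a short citation-chasing exercise.
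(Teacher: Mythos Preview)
Your proposal is correct and follows essentially the same approach as the paper: both reduce to the observation (established just before the corollary) that $\mathcal D_{m,n}(K)$ is either $S^4$ (when $m$ is even, by Tange) or the Gluck twist on the $n$-roll spin $S_{0,n}(K)$ (when $m$ is odd), and then invoke \thmref{main}. The paper's proof is simply more terse, packaging your parity split into a single sentence citing ``the observation above.''
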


\begin{proof}  This follows from the observation above that $\mathcal D_{m,n}(K)$ is diffeomorphic to either $S^4$ or the Gluck twist on the $n$-roll spin of $K$. The latter manifold is standard in this case by Theorem \ref{main} since $K$ has unknotting number $1$. 
\end{proof}

As noted in the introduction, this extends the result of Akbulut \cite{akbulut1} that the homotopy $4$-sphere $\mathcal D_{m,\pm 1}(K)$ is standard for every $m \in \mathbb{Z}$ when $K$ is the figure eight knot.

\bibliographystyle{alpha}


\end{document}